\newtheorem{theorem}{Theorem}
\newtheorem{definition}[theorem]{Definition}
\newtheorem{lemma}[theorem]{Lemma}
\newtheorem{proposition}[theorem]{Proposition}
\theoremstyle{remark}
\newcommand{\tr}{\operatorname{tr}}
\newcommand{\GL}{\operatorname{GL}}
\newcommand{\K}[1]{K^{\operatorname{#1}}}
\newcommand{\M}{\operatorname{M}}
\newcommand{\R}{\operatorname{R}}
\renewcommand{\Re}{\operatorname{Re}}
\renewcommand{\Im}{\operatorname{Im}}
\newcommand{\SC}{\mathcal{SC}}
\newcommand{\abs}[1]{\left\vert #1\right\vert}
\newcommand{\norm}[1]{{\left\vert\kern-0.25ex\left\vert #1\right\vert\kern-0.25ex\right\vert}}
\newcommand{\normJ}[1]{{\left\vert\kern-0.25ex\left\vert #1\right\vert\kern-0.25ex\right\vert}_J}
\newcommand{\normA}[1]{{\left\vert\kern-0.25ex\left\vert #1\right\vert\kern-0.25ex\right\vert}_A}
\newcommand{\normAt}[1]{{\left\vert\kern-0.25ex\left\vert #1\right\vert\kern-0.25ex\right\vert}_{\widetilde{A}}}
\newcommand{\normnA}[1]{{\left\vert\kern-0.25ex\left\vert #1\right\vert\kern-0.25ex\right\vert}_{\M(n, A)}}
\newcommand{\normnJ}[1]{{\left\vert\kern-0.25ex\left\vert #1\right\vert\kern-0.25ex\right\vert}_{\M(n, J)}}
\newcommand{\normJt}[1]{{\left\vert\kern-0.25ex\left\vert #1\right\vert\kern-0.25ex\right\vert}_{\widetilde{J}}}
\newcommand{\normCone}[1]{{\left\vert\kern-0.25ex\left\vert #1\right\vert\kern-0.25ex\right\vert}_{C^1}}
\newcommand{\normLtwo}[1]{{\left\vert\kern-0.25ex\left\vert #1\right\vert\kern-0.25ex\right\vert}_{L^2(X\times\mathbb{R})}}
\newcommand{\normSCone}[1]{{\left\vert\kern-0.25ex\left\vert #1\right\vert\kern-0.25ex\right\vert}_{\mathcal{SC}^1}}
\newcommand{\normTone}[1]{{\left\vert\kern-0.25ex\left\vert #1\right\vert\kern-0.25ex\right\vert}_{\mathcal{T}^1}}
\newcommand{\normellone}[1]{{\left\vert\kern-0.25ex\left\vert #1\right\vert\kern-0.25ex\right\vert}_{\ell^1(\mathbb{R}_d,\delta)}}
\newcommand{\normellonem}[1]{{\left\vert\kern-0.25ex\left\vert #1\right\vert\kern-0.25ex\right\vert}_{\ell^1(\mathbb{R}_d,m)}}
\newcommand{\normone}[1]{{\left\vert\kern-0.25ex\left\vert #1\right\vert\kern-0.25ex\right\vert}_1}
\newcommand{\normoplus}[1]{{\left\vert\kern-0.25ex\left\vert #1\right\vert\kern-0.25ex\right\vert}_\oplus}
\newcommand{\normp}[1]{{\left\vert\kern-0.25ex\left\vert #1\right\vert\kern-0.25ex\right\vert}_p}
\newcommand{\normtwo}[1]{{\left\vert\kern-0.25ex\left\vert #1\right\vert\kern-0.25ex\right\vert}_2}
\newcommand{\normop}[1]{{\left\vert\kern-0.25ex\left\vert #1\right\vert\kern-0.25ex\right\vert}_{op}}
\newcommand{\normtrace}[1]{{\left\vert\kern-0.25ex\left\vert #1\right\vert\kern-0.25ex\right\vert}_{L^1(\tau)}}
\newcommand{\normsup}[1]{{\left\vert\kern-0.25ex\left\vert #1\right\vert\kern-0.25ex\right\vert}_\infty}
\newcommand{\vertiii}[1]{{\left\vert\kern-0.25ex\left\vert\kern-0.25ex\left\vert #1 
    \right\vert\kern-0.25ex\right\vert\kern-0.25ex\right\vert}}
\begin{document}
\title[Determinant formula for Toeplitz operators]{A determinant formula for Toeplitz operators associated to a minimal flow}

\author{Efton Park}
\address{Department of Mathematics, Texas Christian University, Box 298900, Fort Worth, TX, 76129, United States}
\email{e.park@tcu.edu}

\begin{abstract}
We define a determinant on the Toeplitz algebra associated to a minimal flow, give a formula for this determinant in terms of symbols, and show that
this determinant can be used to give information about the algebraic $K$-theory of functions on the underlying space.
\end{abstract}

\maketitle



Let $A$ be an $n \times n$ matrix with complex entries.  The equation
\[
\det(e^A) = e^{\tr(A)} \tag{$\star$}
\]
establishes a beautiful and important relationship between the trace and the determinant on $\M(n, \mathbb{C})$.  When $\mathcal{H}$ is an infinite-dimensional
separable Hilbert space, we can extend the definition of trace on $\M(n, \mathbb{C})$ to produce a trace on the algebra
$\mathcal{B}(\mathcal{H})$ of bounded linear maps on $\mathcal{H}$, although this trace is only defined on the so-called ideal of trace class
operators.  We can then use functional calculus and $(\star)$ to define a determinant on invertible elements of $\mathcal{B}(\mathcal{H})$ of the form
1 + trace class.  

For other operator algebras, the way forward is much less clear.  Suppose that $\mathcal{M}$ is a type $\operatorname{II}_1$ factor endowed with the
normal trace $\tau$ for which $\tau(1) = 1$.  In \cite{FK}, Fuglede and Kadison defined a determinant on $\GL(1, \mathcal{M})$ via the formula
\[
{\det}^{FK}_\tau(x) = \exp\left(\tau\left(\log\left(\abs{x}\right)\right)\right). 
\]
The Fuglede-Kadison determinant is a group homomorphism from $\GL(1, \mathcal{M})$ onto the multiplicative group $\mathbb{R}^+$ 
of positive real numbers; this requires a significant amount of effort to prove.  This determinant provides valuable information about invertible
elements of $\mathcal{M}$.  For example, Fack and de la Harpe (\cite{FH}, Proposition 2.5)
proved that the kernel of the Fuglede-Kadison determinant is precisely the commutator subgroup 
$\left[\GL(1, \mathcal{M}), \GL(1, \mathcal{M})\right]$ of $\GL(1, \mathcal{M})$, and 
$\det^{FK}_\tau$ induces an isomorphism 
\[
\frac{\GL(1, \mathcal{M})}{\left[\GL(1, \mathcal{M}), \GL(1, \mathcal{M})\right]} \cong \mathbb{R}^+.
\]

Brown constructed a version of the Fuglede-Kadison determinant for semi-finite von Neumann algebras (\cite{B2}).   As is the case for
the Fuglede-Kadison determinant, it is nontrivial to prove that the Brown determinant satisfies the desired algebraic properties of a determinant.  

Suppose that $A$ is a Banach algebra with unit $1$ equipped with a continuous complex trace $\tau$, and 
let $\underline{\tau}$ denote the complex-valued map that $\tau$ induces
on $K$-theory.  In \cite{HS}, de la Harpe and Skandalis defined a group homomorphism
\[
\Delta_\tau: \GL(A)_0 \rightarrow \frac{\mathbb{C}}{\underline{\tau}(\operatorname{K}_0(A))}.
\]
This map is called the de la Harpe-Skandalis determinant, but it would more accurately described as the log of a 
determinant, because the group operation on $\mathbb{C}\slash\underline{\tau}(\operatorname{K}_0(A))$ is addition.  
For a $\operatorname{II}_1$ factor $\mathcal{M}$, we have $\GL(\mathcal{M})_0 = \GL(\mathcal{M})$ and 
$\underline{\tau}(\operatorname{K}_0(\mathcal{M})) = \mathbb{R}$, and 
\[
{\det}^{FK}_\tau(x) = \exp\left(\Re(2\pi i\Delta_\tau(x))\right)
\]
for all $x$ in $\GL(1, \mathcal{M})$.

In \cite{HKS}, Hochs, Kaad, and Schemaitat used Connes and Karoubi's long exact sequence relating operator algebra $K$-theory,
algebraic $K$-theory and ``relative'' $K$-theory (\cite{Kar}, \cite{CK}) to extend de la Harpe and Skandalis's determinant to the situation where
$A$ is a unital Banach algebra with a trace $\tau$ defined on a not necessarily closed ideal $J$ in $A$.
If $A$ is a semifinite von Neumann algebra $\mathcal{N}$ and $J$ is the trace ideal $L^1(\mathcal{N})$ associated to a semifinite trace $\tau$ on 
$\mathcal{N}$, the Hochs-Kaad-Schemaitat determinant is a homomorphism
\[
{\det}_\tau:  \K{alg}_1(\mathcal{N}, L^1(\mathcal{N})) \longrightarrow (0, \infty).
\]
If $x$ is in $\GL(L^1(\mathcal{N}))$, then $x$ determines an element $[x]$ of $\K{alg}_1(\mathcal{N}, L^1(\mathcal{N})) $, and $\det_\tau([x])$
agrees with Brown's determinant applied to $x$.  
 An advantage of the approach Hochs, Kaad, Schemaitat take is that the standard properties of a determinant follow directly
 from algebraic properties of the groups and maps described in \cite{Kar} and \cite{CK}.  
 
For operators that arise in a geometric or topological setting, one might hope to compute determinants in terms of geometric or topological data.  For example,
suppose $T_\phi$ and $T_\psi$ are Toeplitz operators on the circle with smooth symbols $\phi$ and $\psi$, and suppose $\phi$ and $\psi$
are non-vanishing with winding number zero.  Then $T_\phi$ and $T_\psi$ are invertible, and the multiplicative commutator 
\[
T_\phi T_\psi T_\phi^{-1} T_\psi^{-1} = I + (T_\phi T_\psi - T_\psi T_\phi) T_\phi^{-1} T_\psi^{-1}
\]
has the form identity plus a trace class operator, and therefore has a well-defined determinant.  The determinant of 
$T_\phi T_\psi T_\phi^{-1} T_\psi^{-1}$ can be computed in various ways 
from the symbols $\phi$ and $\psi$; for see example, Helton and Howe (\cite{HH}, Proposition 10.1); Brown (\cite{B1}, Section 6);
and Carey and Pincus (\cite{CP}, Proposition 1).  In fact, $\det(T_\phi T_\psi T_\phi^{-1} T_\psi^{-1})$ only
depends on the Steinberg symbol $\{\phi, \psi\}$ in the algebraic $K$-theory group $\K{alg}_2(C^\infty(S^1))$, and thus the determinant gives a way to detect 
nontrivial elements of $\K{alg}_2(C^\infty(S^1))$.  

In this paper, we consider the following generalization.  Let $X$ be a separable compact Hausdorff space equipped with a Borel measure
$\mu$ with full support and a minimal flow $\alpha$ such that $\alpha$ is measure-preserving and ergodic.   Then 
 $L^\infty(X) \rtimes \mathbb{R}$ is a type $\operatorname{II}_\infty$ factor.  Given $\phi$ in $C(X)$, define $M_\phi$
on $L^2(X \times \mathbb{R})$ via pointwise multiplication.  We can use the Hilbert transform to construct a Hardy space $H^2(X \times \mathbb{R})$
and then define a Toeplitz operator $T_\phi$ by compressing $M_\phi$ to $H^2(X \times \mathbb{R})$.  Let $\mathcal{T}(X, \alpha)$ be the
$C^*$-algebra generated by these Toeplitz operators, and let $\mathcal{SC}(X, \alpha)$ be the $C^*$-ideal generated by the semicommutators
$T_\phi T_\psi - T_{\phi\psi}$.  In \cite{CMX}, Curto, Muhly, and Xia constructed a short exact sequence 
\[
\xymatrix@C=30pt
{0 \ar[r]^-{} & \SC(X, \alpha) \ar[r]^-{} & \mathcal{T}(X, \alpha) \ar[r]^-{\sigma}  & C(X) \ar[r]^-{} & 0}
\]
such that $\sigma$ is a symbol map; that is, $\sigma(T_\phi) = \phi$ for all $\phi$ in $C(X)$.
Let $C^1(X, \alpha)$ denote the set of elements of $C(X)$ that are differentiable along the orbits of $\alpha$.   We will construct a 
\lq\lq sub-short exact sequence\rq\rq
\[
\xymatrix@C=30pt
{0 \ar[r]^-{} & \SC^1(X, \alpha) \ar[r]^-{} & \mathcal{T}^1(X, \alpha) \ar[r]^-{\sigma}  & C^1(X, \alpha) \ar[r]^-{} & 0}
\]
of Banach algebras with the property that elements of $\SC^1(X, \alpha)$ are trace class with respect to a semifinite trace $\tau$ on
 $L^\infty(X) \rtimes \mathbb{R}$.  We will then apply the Hochs-Kaad-Schemaitat machinery to this exact sequence to define a function
 $d$ on the unitization $\SC^1(X, \alpha)^+$ of $\SC^1(X, \alpha)$ that satisfies the properties of a determinant, and we produce a formula
 for $d$ on certain multiplicative commutators of elements of $\mathcal{T}^1(X, \alpha)$ in terms of symbols:
\[
d(e^Te^We^{-T}e^{-W}) =  \exp\left(\frac{1}{2\pi}\int_X{\kern-0.5ex}\Im\bigl(\sigma(T)^\prime(x)\sigma(W)(x)\bigr)\, d\mu(x)\right).
\]
 We will then employ the long exact sequence in algebraic $K$-theory to construct a group homomorphism from $\K{alg}_2(C^1(X, \alpha))$
 to $\mathbb{R}$; this map gives a way to show various elements of the highly complicated group $\K{alg}_2(C^1(X, \alpha))$ are nontrivial.
 
{\bf Acknowledgement:} I thank the anonymous referee of an earlier version of this paper for pointing out Curto, Muhly, and Xia's trace formula
for Toeplitz operators on minimal flows and suggesting the exploration of the connection between determinants and algebraic $K$-theory in this 
context.

\section{Toeplitz operators on minimal ergodic flows}\label{Toeplitz}

In this section we collect the results we need from \cite{CMX}, although our notation will differ
somewhat from theirs.   Let $X$ be a separable compact Hausdorff space equipped with a minimal flow $\alpha = \{\alpha_t\}_{t\in\mathbb{R}}$; given a point $x$ in $X$ and
a real number $t$, we will write $\alpha_t(x)$ as $x + t$.  Suppose $X$ admits a Borel probability measure $\mu$ with the following properties: (1) the support of $\mu$
 is all of $X$; (2) the maps $\alpha_t$ are measure-preserving for each real number $t$; and (3) $\alpha$ is ergodic with respect to $\mu$.
 
Endow $\mathbb{R}$ with Lebesgue measure and consider the Hilbert space $L^2(X \times \mathbb{R})$ associated
with the product measure on $X \times \mathbb{R}$.  Given $\phi$ in $C(X)$, define $M_\phi$ on $L^2(X \times \mathbb{R})$ by
pointwise multiplication:
\[
(M_\phi h)(x, s) = \phi(x)h(x, s).
\]
Define the Hilbert transform $H$ on $L^2(X \times \mathbb{R})$ by
\[
(Hf)(x, t) = \operatorname{PV}\left(\frac{1}{\pi i}\int_{-\infty}^\infty\frac{1}{s}f(x + s, t - s)\, ds\right).
\]
The factor of $\frac{1}{i}$, which is often not present in the definition of the Hilbert transform, is included in \cite{CMX} to make $H$ self adjoint.
Set $P = \frac{1}{2}(I + H)$.  Then $P$ is a projection; denote the range of $P$ by $H^2(X \times \mathbb{R})$.
 For each $\phi$ in $C(X)$, define the Toeplitz operator
$T_\phi: H^2(X \times \mathbb{R}) \rightarrow H^2(X \times \mathbb{R})$ by the formula
\[
T_\phi = PM_\phi.
\]
\begin{definition} The Toeplitz algebra associated to the flow $\alpha$ on $X$ is the $C^*$-subalgebra $\mathcal{T}(X, \alpha)$ of 
$\mathcal{B}(H^2(X \times \mathbb{R}))$ generated by the set $\{T_\phi : \phi \in C(X)\}$. 
\end{definition}

\begin{definition} The semi-commutator ideal of $\mathcal{T}(X, \alpha)$ is the $C^*$-ideal $\SC(X, \alpha)$ of $\mathcal{T}(X, \alpha)$ generated
by the set $\{T_\phi T_\psi - T_{\phi\psi} : \phi, \psi \in C(X)\}$.
\end{definition}

By Lemma 24.2 in \cite{CMX}, we have a short exact sequence
\[
\xymatrix@C=30pt
{0 \ar[r]^-{} & \SC(X, \alpha) \ar[r]^-{} & \mathcal{T}(X, \alpha) \ar[r]^-{\sigma}  & C(X) \ar[r]^-{} & 0}
\]
with the feature that $\sigma(T_\phi) = \phi$ for every $\phi$ in $C(X)$.  The short exact sequence has an isometric linear
splitting $\xi$ defined by $\xi(\phi) = T_\phi$.  As a consequence, every element of $\mathcal{T}(X, \alpha)$ can be uniquely
written in the form $T_\phi + S$ for some $\phi$ in $C(X)$ and $S$ in $\SC(X, \alpha)$, and $\normop{T_\phi} = \normsup{\phi}$
for every $\phi$ in $C(X)$.
\vskip 6pt
\emph{Remark:} The commutator ideal of $\mathcal{T}(X, \alpha)$ is contained in $\SC(X, \alpha)$, and   
Lemma 24.1 in \cite{CMX} states that if the action of $\alpha$ on $X$ is strictly ergodic, then these two ideals are equal.  This might
be true in general; I am not aware of any examples of minimal ergodic flows where the two ideals differ.  But in any case, we will not
investigate this issue here.
\vskip 6pt
For each real number $t$, define a unitary operator $U_t$ on $L^2(X \times \mathbb{R})$ by the formula
\[
(U_t h)(x, s) = h(x + t, t - s).
\]
Let $L^\infty(X) \rtimes \mathbb{R}$ be the von Neumann subalgebra of $\mathcal{B}(L^2(X \times \mathbb{R}))$
generated by the $M_\phi$ and $U_t$.  Because the action $\alpha$ is ergodic with respect to the measure $\mu$ and 
$\mu$ has full support,  $L^\infty(X) \rtimes \mathbb{R}$ is a type $\operatorname{II}_\infty$ factor 
and therefore admits a semifinite normal trace $\tau$.  The algebra $C_c(X \times \mathbb{R})$
is weakly dense in $L^\infty(X) \rtimes \mathbb{R}$; we scale $\tau$ so that 
\[
\tau(f) = \int_X f(x, 0)\, d\mu(x)
\]
for every $f$ in $C_c(X \times \mathbb{R})$.  
Following \cite{Kun}, define
\[
L^p(\tau) = \{F \in L^\infty(X) \rtimes \mathbb{R}: \tau\left(\abs{F}^p\right) < \infty\},\quad p = 1, 2
\]
and set 
\[
\normp{S} = \bigl(\tau\left(\abs{S}^p\right)\bigr)^{1\slash p}, \quad S \in L^p(\tau).
\]
(In fact, \cite{Kun} defines $L^p(\tau)$ for all $1 \leq p \leq \infty$, but $p = 1, 2$ are all we will need.)
Each $L^p(\tau)$ is an ideal in $L^\infty(X) \rtimes \mathbb{R}$.

\begin{proposition}[Holder's inequality (\cite{Kun}, Theorem 1)]\label{Holder} If $A$ and $B$ are in $L^2(\tau)$, then $AB$ is in $L^1(\tau)$, and
$\normone{AB} \leq \normtwo{A}\normtwo{B}$.
\end{proposition}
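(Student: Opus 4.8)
Since the proposition is stated only for the exponents $p = q = 2$, it is really the tracial Cauchy--Schwarz inequality in disguise, and I would prove it directly rather than invoke the full interpolation machinery of noncommutative $L^p$ spaces. The plan is to establish the single inequality $\tau(\abs{AB}) \leq \normtwo{A}\normtwo{B}$: because the right-hand side is finite by hypothesis, this simultaneously shows that $AB$ lies in $L^1(\tau)$ and gives the norm bound $\normone{AB} \leq \normtwo{A}\normtwo{B}$.

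First I would take the polar decomposition $AB = W\abs{AB}$ inside the factor $L^\infty(X)\rtimes\mathbb{R}$, where $W$ is a partial isometry lying in the algebra; this yields $\abs{AB} = W^*AB$, so the goal becomes bounding $\tau(W^*AB)$. Rewriting $W^*AB = (A^*W)^*B$ and regarding $\tau(Y^*Z)$ as the $L^2(\tau)$ inner product $\langle Z, Y\rangle$, the tracial Cauchy--Schwarz inequality gives $\abs{\tau((A^*W)^*B)} \leq \normtwo{A^*W}\,\normtwo{B}$. It then remains to control $\normtwo{A^*W}$. Using that $\tau$ is a trace and that $WW^* \leq I$, I would compute $\normtwo{A^*W}^2 = \tau(W^*AA^*W) = \tau(AA^*WW^*) \leq \tau(AA^*) = \normtwo{A}^2$, where the middle inequality uses positivity and monotonicity of $\tau$ together with $(AA^*)^{1/2}WW^*(AA^*)^{1/2} \leq AA^*$, and the last equality uses traciality ($\tau(AA^*) = \tau(A^*A)$) to identify $\normtwo{A^*}$ with $\normtwo{A}$. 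Combining these estimates yields the desired bound.

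The main obstacle is entirely a matter of rigor rather than of ideas: a priori one does not know that $\abs{AB}$ lies in $L^1(\tau)$, so the identity $\tau(\abs{AB}) = \tau(W^*AB)$, the cycling of factors under $\tau$, and the Cauchy--Schwarz step must all be justified before finiteness has been established, on pain of circularity. I would handle this by truncation. Letting $P_n = \chi_{[0,n]}(\abs{AB})$ be the spectral projections of $\abs{AB}$ and compressing further by an increasing net of finite-trace projections exhausting the identity, all the trace manipulations above can be carried out among genuine $L^1(\tau)$ elements, where the tracial identities and the monotone bound $WW^* \leq I$ apply legitimately. Performing the Cauchy--Schwarz estimate for each truncation and then invoking normality of $\tau$ to pass to the supremum gives $\tau(\abs{AB}) = \sup_n \tau(\abs{AB}P_n) \leq \normtwo{A}\normtwo{B} < \infty$, which both places $AB$ in $L^1(\tau)$ and proves the inequality. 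Everything after this finiteness bookkeeping is elementary operator algebra.
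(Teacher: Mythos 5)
The paper gives no proof of this proposition at all: it is imported verbatim as Theorem 1 of Kunze's paper \cite{Kun}, which establishes the noncommutative H\"older inequality for general conjugate exponents over a von Neumann algebra with a semifinite normal trace. So the comparison is between your self-contained argument and an appeal to the literature, and your argument is the standard, correct specialization to $p = q = 2$: polar decomposition $AB = W\abs{AB}$, tracial Cauchy--Schwarz for the positive form $(x, y) \mapsto \tau(y^*x)$, and the bound $\normtwo{A^*W} \leq \normtwo{A}$, whose justification via $\tau$-cycling, $(AA^*)^{1/2}WW^*(AA^*)^{1/2} \leq AA^*$, and monotonicity of $\tau$ is exactly right. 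The citation buys the paper brevity and the general-$p$ statement; your proof buys self-containedness and makes visible that the only property of $\tau$ ever used is that it is a semifinite normal trace. Two remarks on the finiteness bookkeeping, which you rightly identify as the only delicate point but handle somewhat awkwardly. First, in this paper $L^2(\tau)$ is by definition a set of \emph{bounded} operators inside $L^\infty(X) \rtimes \mathbb{R}$ (not a Haagerup-style completion), so $\abs{AB}$ is a bounded operator and your spectral truncation $P_n = \chi_{[0,n]}(\abs{AB})$ is eventually equal to $I$: it does no work. Only the compression by finite-trace projections $Q_i \nearrow I$ matters, and that part of your plan is sound: by normality $\tau(\abs{AB}) = \sup_i \tau\bigl(\abs{AB}^{1/2} Q_i \abs{AB}^{1/2}\bigr)$, and each truncated term satisfies the Cauchy--Schwarz bound. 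Second, you can dispense with truncation entirely via polarization: $AB = \frac{1}{4}\sum_{k=0}^{3} i^k \bigl(B + i^k A^*\bigr)^*\bigl(B + i^k A^*\bigr)$ exhibits $AB$ as a linear combination of positive operators of finite trace, hence $AB$ lies in the span $\mathfrak{m}_\tau$ of such elements; since $\mathfrak{m}_\tau$ is a two-sided ideal on which $\tau$ is finite and linear, $\abs{AB} = W^*AB$ lies in $\mathfrak{m}_\tau$ as well, so $\tau(\abs{AB}) < \infty$ is secured \emph{before} any estimate is attempted, and the Cauchy--Schwarz step then takes place among elements whose traces are all finite. With either repair your proof is complete and correct.
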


\begin{proposition}[(\cite{Kun}, Corollary 1.1)]\label{opineq}For all $S$ in $L^p(\tau)$ and $F$ in $L^\infty(X) \rtimes \mathbb{R}$,
we have the inequalities 
\[
\normp{SF} \leq \normp{S}\normop{F},\quad \normp{FS} \leq \normop{F}\normp{S}.
\]
\end{proposition}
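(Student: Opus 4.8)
The two inequalities are equivalent under the adjoint operation. Since $T$ and $T^*$ have the same generalized singular values we have $\normp{T^*} = \normp{T}$, and together with $(FS)^* = S^* F^*$ this means that once the first inequality $\normp{SF} \leq \normp{S}\normop{F}$ is known for all $S$ in $L^p(\tau)$ and $F$ in $L^\infty(X) \rtimes \mathbb{R}$, applying it to $S^*$ and $F^*$ gives $\normp{FS} = \normp{S^* F^*} \leq \normp{S^*}\normop{F^*} = \normop{F}\normp{S}$. So I would concentrate on the first inequality and prove it uniformly in $p$ using the generalized singular value function from the theory of noncommutative $L^p$-spaces in \cite{Kun}.

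For a $\tau$-measurable operator $T$, recall the generalized singular value function
\[
\mu_t(T) = \inf\{\normop{TE} : E \text{ a projection in } L^\infty(X) \rtimes \mathbb{R} \text{ with } \tau(1 - E) \leq t\},
\]
a non-increasing function on $[0, \infty)$ with $\mu_0(T) = \normop{T}$. The two facts I would extract from \cite{Kun} are the integral formula
\[
\normp{T}^p = \int_0^\infty \mu_t(T)^p\, dt, \qquad 1 \leq p < \infty,
\]
and the domination estimate $\mu_t(SF) \leq \normop{F}\,\mu_t(S)$ valid for all $F$ in $L^\infty(X) \rtimes \mathbb{R}$. Granting these, the proof is immediate and completely insensitive to the value of $p$: raising the pointwise bound to the $p$-th power and integrating gives
\[
\normp{SF}^p = \int_0^\infty \mu_t(SF)^p\, dt \leq \normop{F}^p \int_0^\infty \mu_t(S)^p\, dt = \normop{F}^p\,\normp{S}^p,
\]
and taking $p$-th roots yields the claim.

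The main obstacle is the domination estimate $\mu_t(SF) \leq \normop{F}\,\mu_t(S)$. One must resist deriving it from a pointwise operator inequality, since $\abs{SF}^2 = F^*\abs{S}^2 F \leq \normop{F}^2\abs{S}^2$ is \emph{false} in general (it fails already for $2 \times 2$ matrices), so the monotonicity of $\mu_t$ under the operator order does not apply to $SF$ directly. The correct route, carried out in \cite{Kun}, is the submultiplicativity $\mu_{s+t}(GH) \leq \mu_s(G)\,\mu_t(H)$, proved by a rearrangement argument at the level of spectral projections; specializing to $G = S$, $H = F$ and $t = 0$ and using $\mu_0(F) = \normop{F}$ gives exactly the required bound. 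As an alternative that bypasses singular values, one can interpolate: right multiplication $S \mapsto SF$ has norm at most $\normop{F}$ on $L^\infty(X) \rtimes \mathbb{R}$ by submultiplicativity of the operator norm, and on $L^1(\tau)$ by the duality $\normone{X} = \sup\{\abs{\tau(XY)} : \normop{Y} \leq 1\}$ together with $\normop{FY} \leq \normop{F}$, so the complex interpolation theorem $[L^\infty(X) \rtimes \mathbb{R}, L^1(\tau)]_{1/p} = L^p(\tau)$ for noncommutative $L^p$-spaces produces the bound $\normp{SF} \leq \normop{F}\,\normp{S}$ at every intermediate $p$.
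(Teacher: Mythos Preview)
The paper does not give its own proof of this proposition: it is stated as a direct citation of \cite{Kun}, Corollary~1.1, and no argument follows. So there is nothing in the paper to compare your write-up against.

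Your argument is mathematically sound, but be careful with attribution. The generalized singular value function $\mu_t$ and the submultiplicativity $\mu_{s+t}(GH) \le \mu_s(G)\mu_t(H)$ that you invoke are not in Kunze's 1958 paper; that machinery was developed later (Fack--Kosaki and others). Kunze works directly with $\normp{S} = \tau(\abs{S}^p)^{1/p}$ and proves these module inequalities by more elementary means, essentially from the polar decomposition and operator monotonicity of $t \mapsto t^{p}$ on positive operators for the relevant range, together with the tracial property. So when you write ``the two facts I would extract from \cite{Kun},'' that is anachronistic: your singular-value route is a correct modern proof, just not Kunze's.

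Your interpolation alternative is closer in spirit to what one can actually read off from \cite{Kun}, and it is cleanly stated. One small wording fix: in the $L^1$ step you should say that $\abs{\tau(SFY)} \le \normone{S}\,\normop{FY} \le \normone{S}\,\normop{F}$ for every $Y$ with $\normop{Y} \le 1$, rather than ``$\normop{FY} \le \normop{F}$''; as written it reads as if you are bounding $\normop{FY}$ by $1$.
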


We can decompose $L^2(X \times \mathbb{R})$ as 
$H^2(X \times \mathbb{R}) \oplus H^2(X \times \mathbb{R})^\perp$, and via this decomposition, we can
view $\mathcal{B}(H^2(X \times \mathbb{R}))$ as a subalgebra of $\mathcal{B}(L^2(X \times \mathbb{R}))$:
\[
S \hookrightarrow \begin{pmatrix} S & 0 \\ 0 & 0 \end{pmatrix}.
\]
Let $\mathcal{N}$ be the $\operatorname{II}_\infty$ factor
\[
\mathcal{N} = P\left(L^\infty(X) \rtimes \mathbb{R}\right)P. 
\]
The trace $\tau$ on $L^\infty(X) \rtimes \mathbb{R}$ restricts to $\mathcal{N}$, and we will use $\tau$ to denote the trace in both situations.  It should be clear from context
which is the appropriate domain for $\tau$.

\begin{definition} A function $\phi$ in $C(X)$ is differentiable with respect to $\alpha$ if the limit
\[
\phi^\prime(x) = \lim_{t\to0}\frac{\phi(x + t) - \phi(x)}{t}
\]
exists for each $x$ in $X$.  Let $C^1(X, \alpha)$ be the set of functions on $X$ that are continuously differentiable with respect to $\alpha$, and define a 
norm $\normCone{\cdot}$ on $C^1(X, \alpha)$ by the formula
\[
\normCone{\phi} = \normsup{\phi} + \normsup{\phi^\prime}.
\]
\end{definition}
It is routine to check that $C^1(X, \alpha)$ is a Banach algebra when equipped with this norm.

\begin{theorem}\label{semi} The semicommutator $T_\phi T_\psi - T_{\phi\psi}$ is in $L^1(\tau)$ for all $\phi$ and $\psi$ in $C^1(X, \alpha)$, and 
\[
\normone{T_\phi T_\psi - T_{\phi\psi}} \leq \normCone{\phi}\normCone{\psi}.
\]
\end{theorem}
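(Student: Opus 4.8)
The plan is to factor the semicommutator into a product of two Hankel-type operators, show that each lies in $L^2(\tau)$, and then invoke H\"older's inequality (Proposition~\ref{Holder}). Writing $Q = I - P = \tfrac12(I - H)$, a direct computation on $H^2(X\times\mathbb{R})$ gives
\[
(T_\phi T_\psi - T_{\phi\psi})h = PM_\phi PM_\psi h - PM_\phi M_\psi h = -PM_\phi Q M_\psi h .
\]
Since $Q = Q^2$ and $P$ acts as the identity on $H^2(X\times\mathbb{R})$, the semicommutator factors as $-(PM_\phi Q)(QM_\psi P)$. Using $H^2 = I$, hence $PQ = QP = 0$, together with $P = \tfrac12(I+H)$, each factor collapses to a commutator with the Hilbert transform:
\[
PM_\phi Q = P[M_\phi,Q] = -\tfrac12\,P[M_\phi,H], \qquad QM_\psi P = Q[M_\psi,P] = \tfrac12\,Q[M_\psi,H].
\]
The whole problem therefore reduces to controlling $[M_\phi,H]$ and $[M_\psi,H]$ in the $\normtwo{\cdot}$-norm.

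Next I would identify $[M_\phi,H]$ as an element of $L^\infty(X)\rtimes\mathbb{R}$ and compute its $L^2(\tau)$-norm. From the defining formula for $H$, the commutator acts by
\[
([M_\phi,H]h)(x,t) = \frac{1}{\pi i}\int_{-\infty}^\infty \frac{\phi(x)-\phi(x+s)}{s}\,h(x+s,t-s)\,ds,
\]
and the crucial point is that differentiability of $\phi$ along the flow cancels the singularity of the Hilbert kernel at $s = 0$, so the symbol $k(x,s) = \tfrac{1}{\pi i}\,\tfrac{\phi(x)-\phi(x+s)}{s}$ is bounded and no principal value is needed. An operator acting by $(Fh)(x,t) = \int f(x,s)\,h(x+s,t-s)\,ds$ has, by the normalization $\tau(f) = \int_X f(x,0)\,d\mu(x)$ together with the $\alpha$-invariance of $\mu$, the Plancherel-type identity $\normtwo{F}^2 = \int_X\int_{-\infty}^\infty \abs{f(x,s)}^2\,ds\,d\mu(x)$. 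Applied to $k$ this gives
\[
\normtwo{[M_\phi,H]}^2 = \frac{1}{\pi^2}\int_X\int_{-\infty}^\infty \frac{\abs{\phi(x)-\phi(x+s)}^2}{s^2}\,ds\,d\mu(x),
\]
which is finite precisely because of the cancellation above.

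To bound this I would split the inner integral at $\abs{s} = 1$. For $\abs{s}\le 1$ the fundamental theorem of calculus gives $\abs{\phi(x)-\phi(x+s)} \le \abs{s}\,\normsup{\phi'}$, so the integrand is at most $\normsup{\phi'}^2$; for $\abs{s} > 1$ the crude bound $\abs{\phi(x)-\phi(x+s)}\le 2\normsup{\phi}$ makes the integrand at most $4\normsup{\phi}^2/s^2$. Integrating and using $\mu(X)=1$ yields $\tfrac14\normtwo{[M_\phi,H]}^2 \le \tfrac{1}{2\pi^2}\normsup{\phi'}^2 + \tfrac{2}{\pi^2}\normsup{\phi}^2 \le \normCone{\phi}^2$, with room to spare, and hence $\tfrac12\normtwo{[M_\phi,H]} \le \normCone{\phi}$ (and likewise for $\psi$). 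By Proposition~\ref{opineq}, with $\normop{P},\normop{Q}\le 1$, the two factors obey $\normtwo{PM_\phi Q}\le\normCone{\phi}$ and $\normtwo{QM_\psi P}\le\normCone{\psi}$, so Proposition~\ref{Holder} gives
\[
\normone{T_\phi T_\psi - T_{\phi\psi}} = \normone{(PM_\phi Q)(QM_\psi P)} \le \normtwo{PM_\phi Q}\,\normtwo{QM_\psi P} \le \normCone{\phi}\,\normCone{\psi}.
\]

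The main obstacle is the second step: making rigorous the passage from the principal-value Hilbert transform to a genuine element of $L^\infty(X)\rtimes\mathbb{R}$ carrying the stated bounded symbol, and justifying the Plancherel formula for the $L^2(\tau)$-norm in terms of that symbol. This is exactly where Curto, Muhly, and Xia's description of the trace on Toeplitz operators for minimal flows enters; once the correspondence between operators and symbols and the normalization of $\tau$ are in hand, the cancellation of the kernel's singularity by $\phi'$ and the elementary split-integral estimate are routine.
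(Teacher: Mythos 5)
Your proof is correct and follows essentially the same route as the paper's: factoring the semicommutator as $-(PM_\phi Q)(QM_\psi P)$, reducing each factor to $\tfrac12$ times a commutator with the Hilbert transform, identifying the $L^2(\tau)$-norm of that commutator with the $L^2$-norm of its kernel (the Curto--Muhly--Xia ingredient, Lemma 22.1 in \cite{CMX}, which the paper likewise cites rather than reproves), splitting the kernel integral at $\abs{s}=1$ with a derivative bound near zero and a sup-norm bound at infinity, and finishing with H\"older's inequality. The only cosmetic differences are that the paper obtains the factorization by comparing $(1,1)$-entries of the matrix identity $M_\phi M_\psi = M_{\phi\psi}$ rather than by inserting $I = P + Q$ directly, and the constant bookkeeping is arranged slightly differently.
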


\begin{proof} We adapt an argument from \cite{CMX}, Theorem 23.1.  Let $Q = 1 - P$ be orthogonal projection of 
$L^2(X \times \mathbb{R})$ onto $H^2(X \times \mathbb{R})^\perp$, and for each $\phi$ in $C(X)$, write $M_\phi$ as the matrix
\[
M_\phi = 
\begin{pmatrix}
PM_\phi P & PM_\phi Q \\ QM_\phi P & QM_\phi Q
\end{pmatrix}.
\]
Write $M_\phi M_\psi$ and $M_{\phi\psi}$ in this form.  Because these two operators are equal, the $(1,1)$ entries of their matrices agree, which
gives us the equation
\[
PM_{\phi\psi}P - (PM_\phi P)(PM_\psi  P) = (PM_\phi Q)(QM_\psi P).
\]
An easy computation shows that $PM_\phi Q = \frac{1}{2}P[H, M_\phi]$.  We also have
\[
QM_\psi P = (PM_{\bar{\psi}} Q)^* = \frac{1}{2}(P[H, M_{\bar{\psi}}])^* = -\frac{1}{2}P[H, M_\psi].
\]
By Lemma 22.1 in \cite{CMX}, the quantity
$[H, M_\phi]$ is in $L^2(\tau)$ for all $\phi$ in $C^1(X, \alpha)$.  Therefore $PM_\phi Q$ and $QM_\psi P$ are in $L^2(\tau)$, and thus their
product is in $L^1(\tau)$.  By identifying $\mathcal{B}(H^2(X \times \mathbb{R}))$ as a subalgebra of $\mathcal{B}(L^2(X \times \mathbb{R}))$ in
the manner described above, we see that $T_\phi T_\psi - T_{\phi\psi}$ is in $L^1(\tau)$.
By Proposition \ref{Holder}, 
\[
\normone{T_\phi T_\psi - T_{\phi\psi}} = \normone{(PM_\phi Q)(QM_\psi P)} \leq \normtwo{PM_\phi Q}\normtwo{QM_\psi P}.
\]

The next step is to show that $\normtwo{PM_\phi Q} \leq \normCone{\phi}$ for all $\phi$ in $C^1(X, \alpha)$.
From Proposition \ref{opineq}, we have
\[
\normtwo{PM_\phi Q} = \normtwo{\frac{1}{2}P[H, M_\phi]} \leq \frac{1}{2}\normsup{P}\normtwo{[H, M_\phi]} = \frac{1}{2}\normtwo{[H, M_\phi]}.
\]
For $\phi$ in $C^1(X, \alpha)$, define $k_\phi: X \times \mathbb{R} \rightarrow \mathbb{C}$ to be
\[
k_\phi(x, t) = \begin{cases} \frac{\phi(x + t) - \phi(x)}{\pi t} & t \neq 0 \\ {\hskip 10pt} \frac{1}{\pi}\phi^\prime(x) & t = 0.\end{cases}
\]
The proof of Lemma 22.1 in \cite{CMX} states that $\normtwo{[H, M_\phi]} = \normLtwo{k_\phi}$.
\begin{align*}
\normLtwo{k_\phi}^2 &= \frac{1}{\pi^2}\int_X \int_{\mathbb{R}} \abs{\frac{\phi(x + t) - \phi(x)}{t}}^2\, dt\, d\mu(x) \\
= & \frac{1}{\pi^2}\int_X \int_{-\infty}^{-1} \abs{\frac{\phi(x + t) - \phi(x)}{t}}^2\, dt\, d\mu(x) \\
& + \frac{1}{\pi^2}\int_X \int_{1}^{\infty} \abs{\frac{\phi(x + t) - \phi(x)}{t}}^2\, dt\, d\mu(x)\\
& + \frac{1}{\pi^2}\int_X \int_{-1}^1 \abs{\frac{\phi(x + t) - \phi(x)}{t}}^2\, dt\, d\mu(x).
\end{align*}
We compute
\begin{multline*}
\int_{1}^{\infty} \abs{\frac{\phi(x + t) - \phi(x)}{t}}^2\, dt = \int_{1}^{\infty} \frac{\abs{\phi(x + t) - \phi(x)}^2}{t^2}\, dt \\
\leq \int_{1}^{\infty} \frac{(2\normsup{\phi})^2}{t^2}\, dt = 4\normsup{\phi}^2.
\end{multline*}
Similarly,
\[
\int_{-\infty}^{-1} \abs{\frac{\phi(x + t) - \phi(x)}{t}}^2\, dt \leq 4\normsup{\phi}^2.
\]
The Mean Value Theorem gives us
\[
\int_{-1}^1 \abs{\frac{\phi(x + t) - \phi(x)}{t}}^2\, dt \leq \int_{-1}^1\normsup{\phi^\prime}^2\, dt = 2\normsup{\phi^\prime}^2.
\]
Hence, because $\mu$ is a probability measure on $X$,
\begin{align*}
\normLtwo{k_\phi}^2 & \leq \frac{2}{\pi^2}\left(4\normsup{\phi}^2 + \normsup{\phi^\prime}^2\right) \\
& \leq \frac{2}{\pi^2}\left(4\normsup{\phi}^2 + 4\normsup{\phi}\normsup{\phi^\prime} + \normsup{\phi^\prime}^2\right) \\
& = \frac{2}{\pi^2}\left(2\normsup{\phi} + \normsup{\phi^\prime}\right)^2 \\
& \leq \frac{2}{\pi^2}\left(2\normsup{\phi} + 2\normsup{\phi^\prime}\right)^2 \\
& =  \frac{8}{\pi^2}\normCone{\phi}^2 \\
& \leq \normCone{\phi}^2.
\end{align*}
Therefore
\[
 \normtwo{PM_\phi Q} \leq \normLtwo{k_\phi} \leq \normCone{\phi},
\]
as desired.  Finally, for all $\psi$ in $C^1(X, \alpha)$,
\[
\normtwo{QM_\psi P} = \normtwo{(QM_\psi P)^*} = \normtwo{PM_{\bar{\phi}} Q} \leq \normCone{\bar{\psi}} = \normCone{\psi},
\]
whence the theorem follows.
\end{proof}

\begin{definition}\label{SC1T1}
\[
\SC^1(X, \alpha) = \SC(X, \alpha) \cap L^1(\tau)
\]
\[
\mathcal{T}^1(X, \alpha) = \{T_\phi + S : \phi \in C^1(X), S \in \SC^1(X, \alpha)\}
\]
\end{definition}

\begin{proposition}\label{algebra} The set $\mathcal{T}^1(X, \alpha)$ is an algebra, and 
$\SC^1(X, \alpha)$ is an ideal in $\mathcal{T}^1(X, \alpha)$.
\end{proposition}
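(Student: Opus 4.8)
The plan is to use the normal form $T_\phi + S$ with $\phi \in C^1(X, \alpha)$ and $S \in \SC^1(X, \alpha)$ and show this form is preserved under the algebra operations. Uniqueness of the form follows from the original short exact sequence: applying $\sigma$ to $T_\phi + S$ returns $\phi$, since $S \in \SC(X, \alpha) = \ker\sigma$, so both $\phi$ and $S$ are determined by the element. Closure under addition and scalar multiplication is then immediate, because $T_\phi + T_\psi = T_{\phi + \psi}$, the function $\phi + \psi$ lies in $C^1(X, \alpha)$, and $\SC^1(X, \alpha) = \SC(X, \alpha) \cap L^1(\tau)$ is a linear subspace (intersection of two subspaces). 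Note also that $\SC^1(X, \alpha) \subseteq \mathcal{T}^1(X, \alpha)$, taking $\phi = 0$.

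The main content is closure under multiplication. Given $T_\phi + S$ and $T_\psi + S'$, I would expand
\[
(T_\phi + S)(T_\psi + S') = T_{\phi\psi} + \bigl[(T_\phi T_\psi - T_{\phi\psi}) + T_\phi S' + S T_\psi + S S'\bigr].
\]
Since $C^1(X, \alpha)$ is a Banach algebra, $\phi\psi \in C^1(X, \alpha)$, so it suffices to show the four bracketed terms lie in $\SC^1(X, \alpha)$. Membership in $\SC(X, \alpha)$ is handled by the fact that $\SC(X, \alpha)$ is an ideal of $\mathcal{T}(X, \alpha)$: the first term is by definition a generator of $\SC(X, \alpha)$, and each of $T_\phi S'$, $S T_\psi$, $S S'$ is a product of elements of $\mathcal{T}(X, \alpha)$ with at least one factor in $\SC(X, \alpha)$. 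Membership in $L^1(\tau)$ is handled by the second ideal structure: the first term lies in $L^1(\tau)$ by Theorem \ref{semi}, while for the remaining three I use that $T_\phi = PM_\phi P$ is a product of elements of $L^\infty(X) \rtimes \mathbb{R}$, hence $\mathcal{T}(X, \alpha) \subseteq \mathcal{N} \subseteq L^\infty(X) \rtimes \mathbb{R}$; then, because $L^1(\tau)$ is an ideal (equivalently, by Proposition \ref{opineq}), multiplying $S, S' \in L^1(\tau)$ on either side by the bounded operators $T_\phi, T_\psi, S, S'$ keeps the product in $L^1(\tau)$. Thus all four terms lie in $\SC^1(X, \alpha)$, the product has the normal form $T_{\phi\psi} + (\text{element of } \SC^1(X, \alpha))$, and $\mathcal{T}^1(X, \alpha)$ is an algebra. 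The ideal claim is the same computation with the symbol part suppressed: for $R \in \SC^1(X, \alpha)$ and $T_\phi + S \in \mathcal{T}^1(X, \alpha)$, the products $(T_\phi + S)R$ and $R(T_\phi + S)$ are sums of terms each having a factor in $\SC^1(X, \alpha)$ and the other in $L^\infty(X) \rtimes \mathbb{R}$, so the same two ideal properties place them in $\SC(X, \alpha) \cap L^1(\tau) = \SC^1(X, \alpha)$.

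The step I expect to be the main obstacle is recognizing the Toeplitz operators as elements of the ambient von Neumann algebra $L^\infty(X) \rtimes \mathbb{R}$, via the identification $T_\phi = PM_\phi P \in \mathcal{N}$, so that the ideal property of $L^1(\tau)$ legitimately applies to the mixed products $T_\phi S'$ and $S T_\psi$. Once this identification is in place, the argument splits cleanly across the two ideal structures (the $C^*$-ideal $\SC(X, \alpha)$ inside $\mathcal{T}(X, \alpha)$, and the trace ideal $L^1(\tau)$ inside $L^\infty(X) \rtimes \mathbb{R}$), with Theorem \ref{semi} supplying the only nontrivial analytic input, namely that the leading semicommutator $T_\phi T_\psi - T_{\phi\psi}$ is trace class; everything else is routine bookkeeping.
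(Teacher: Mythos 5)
Your proposal is correct and takes essentially the same route as the paper: the identical expansion $(T_\phi + S)(T_\psi + R) = T_{\phi\psi} + \bigl[(T_\phi T_\psi - T_{\phi\psi}) + T_\phi R + S T_\psi + SR\bigr]$, with Theorem \ref{semi} handling the semicommutator and the two ideal structures (of $\SC(X,\alpha)$ in $\mathcal{T}(X,\alpha)$ and of $L^1(\tau)$) handling the remaining terms. If anything you are more careful than the paper on one point: the paper invokes $L^1(\tau)$ as an ideal in $\mathcal{B}(H^2(X\times\mathbb{R}))$, whereas $L^1(\tau)$ is by definition a subset of, and an ideal only in, the von Neumann algebra $L^\infty(X)\rtimes\mathbb{R}$ (Proposition \ref{opineq}), so your observation that $\mathcal{T}(X,\alpha)\subseteq\mathcal{N}\subseteq L^\infty(X)\rtimes\mathbb{R}$ via $T_\phi = PM_\phi P$ is exactly the identification needed to make that step airtight.
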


\begin{proof} The only nonobvious point to check is that $\mathcal{T}^1(X, \alpha)$
is closed under multiplication.  For $T_\phi + S$ and $T_\psi + R$ in $\mathcal{T}^1(X, \alpha)$,
\[
(T_\phi + S)(T_\psi + R) = T_{\phi\psi} + \bigl[(T_\phi T_\psi - T_{\phi\psi}) + T_\phi R + ST_\psi + SR\bigr].
\]
The quantity $T_\phi T_\psi - T_{\phi\psi}$ is in $\SC^1(X, \alpha)$ by Theorem \ref{semi}, and the three other terms enclosed in the 
brackets are in $\SC^1(X, \alpha)$ because $\SC(X, \alpha)$ is an ideal in $\mathcal{T}(X, \alpha)$ and because $L^1(\tau)$ is 
an ideal in $\mathcal{B}(H^2(X \times L^2(\mathbb{R})))$.  Thus $(T_\phi + S)(T_\psi + R)$ is in $\mathcal{T}^1(X, \alpha)$.
\end{proof}

\begin{proposition}\label{SC1} Define $\normSCone{\cdot}$ on $\SC^1(X, \alpha)$ by the formula
\[
\normSCone{S} = \normone{S} + \normop{S}.
\]
Then $\mathcal{SC}^1(X, \alpha)$ is a nonunital Banach algebra under this norm, and
\[
\normSCone{ST_\phi} \leq \normSCone{S}\normCone{\phi},\quad \normSCone{T_\phi S} \leq \normCone{\phi}\normSCone{S}
\]
for all $S$ in $\SC^1(X, \alpha)$ and $\phi$ in $C^1(X, \alpha)$.
\end{proposition}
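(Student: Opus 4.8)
The plan is to separate the statement into its three components---that $\normSCone{\cdot}$ is submultiplicative, that $\SC^1(X, \alpha)$ is complete, and the two module inequalities against $T_\phi$---and to observe that everything except completeness is an immediate consequence of results already in hand. That the relevant products remain in $\SC^1(X, \alpha)$ is already recorded in Proposition \ref{algebra}, so only the norm estimates need checking.

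For submultiplicativity, I would combine the operator-norm estimate $\normop{SR} \leq \normop{S}\normop{R}$ with the first inequality of Proposition \ref{opineq} in the case $p = 1$, namely $\normone{SR} \leq \normone{S}\normop{R}$; adding the two gives
\[
\normSCone{SR} = \normone{SR} + \normop{SR} \leq (\normone{S} + \normop{S})\,\normop{R} \leq \normSCone{S}\normSCone{R},
\]
the last step using $\normop{R} \leq \normSCone{R}$. The same ingredients, now invoking the identity $\normop{T_\phi} = \normsup{\phi}$ established after the Curto--Muhly--Xia sequence, yield
\[
\normSCone{ST_\phi} \leq (\normone{S} + \normop{S})\,\normop{T_\phi} = \normSCone{S}\normsup{\phi} \leq \normSCone{S}\normCone{\phi},
\]
and the estimate for $T_\phi S$ follows symmetrically from the second inequality of Proposition \ref{opineq}.

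The real work is completeness, and here I expect the main obstacle. Let $\{S_n\}$ be $\normSCone{\cdot}$-Cauchy. Since $\normSCone{\cdot}$ dominates both $\normop{\cdot}$ and $\normone{\cdot}$, the sequence is Cauchy for each separately, so $S_n \to S$ in operator norm for some $S \in \SC(X, \alpha)$ (the latter being norm-closed), and $S_n \to S'$ in $\normone{\cdot}$ for some $S' \in L^1(\tau)$ (the latter being complete). Everything hinges on proving $S = S'$; once this is known, $S \in \SC(X, \alpha) \cap L^1(\tau) = \SC^1(X, \alpha)$ and $S_n \to S$ in $\normSCone{\cdot}$, and the algebra is non-unital simply because $\tau(1) = \infty$ keeps $1$ out of $L^1(\tau)$. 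To identify the limits I would use semifiniteness of $\tau$ and compress by finite-trace projections: for a projection $e$ with $\tau(e) < \infty$, Proposition \ref{opineq} gives $\normone{e(S_n - S')e} \leq \normone{S_n - S'} \to 0$, while the elementary bound $\normone{eYe} \leq \tau(e)\normop{Y}$ for bounded $Y$ (immediate from $\abs{eYe} \leq \normop{Y}\,e$) gives $\normone{e(S_n - S)e} \leq \tau(e)\,\normop{S_n - S} \to 0$; hence $eSe = eS'e$ for every such $e$. Running $e$ through an increasing net of finite-trace projections with $e \uparrow 1$, the compressions $eSe$ converge strongly to $S$ while $eS'e$ converge to $S'$ in $\normone{\cdot}$, forcing $S = S'$. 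The delicate point is exactly this reconciliation of two a priori unrelated limits; conceptually it reflects that both $\SC(X, \alpha)$ and $L^1(\tau)$ embed continuously into the Hausdorff topological $*$-algebra of $\tau$-measurable operators, where limits are unique.
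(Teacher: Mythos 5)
Your proposal is correct, and on the displayed inequalities it follows exactly the paper's route: the paper likewise adds the estimate $\normone{ST_\phi} \leq \normone{S}\normop{T_\phi} = \normone{S}\normsup{\phi}$ from Proposition \ref{opineq} to the operator-norm estimate $\normop{ST_\phi} \leq \normop{S}\normsup{\phi}$, bounds $\normsup{\phi} \leq \normCone{\phi}$, and treats submultiplicativity and the $T_\phi S$ inequality the same way. The real difference is the completeness claim: the paper dismisses it as ``straightforward to check,'' whereas you correctly identify it as the one point needing an idea, namely reconciling the operator-norm limit $S$ with the trace-norm limit $S'$, and your argument (compression by finite-trace projections, or equivalently uniqueness of limits in the Hausdorff measure topology on $\tau$-measurable operators) is valid. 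One inaccuracy worth flagging: $L^1(\tau)$ as defined in the paper, namely $\{F \in L^\infty(X)\rtimes\mathbb{R} : \tau(\abs{F}) < \infty\}$ with the norm $\normone{\cdot}$ alone, is \emph{not} complete in a $\operatorname{II}_\infty$ factor; its completion is the space of $\tau$-measurable operators with finite trace norm, which contains unbounded elements, so your $S'$ a priori lives only in that larger space. This does not damage the proof: your compression estimates never use boundedness of $S'$, so they still give $eSe = eS'e$ for every finite-trace projection $e$ and hence $S = S'$; then $S$ is bounded, lies in the operator-norm-closed ideal $\SC(X,\alpha)$, and has finite trace norm, so $S \in \SC^1(X,\alpha)$ and $S_n \to S$ in $\normSCone{\cdot}$, as required.
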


\begin{proof} 
It is straightforward to check that $\SC^1(X, \alpha)$ is a Banach space under $\normSCone{\cdot}$, and the submultiplicativity of
$\normSCone{\cdot}$ follows easily from Proposition \ref{opineq}.  This proposition also gives us
\[
\normone{ST_\phi} \leq \normone{S}\normop{T_\phi} = \normone{S}\normsup{\phi} \leq \normone{S}\normCone{\phi},
\]
and the submultiplicativity of the operator norm implies that 
\[
\normop{ST_\phi} \leq \normop{S}\normop{T_\phi} = \normop{S}\normsup{\phi} \leq \normop{S}\normCone{\phi},
\]
whence the inequality $\normSCone{ST_\phi} \leq \normSCone{S}\normCone{\phi}$ follows.  The proof of the second
inequality is similar.
\end{proof}

The exact sequence
\[
\xymatrix@C=30pt
{0 \ar[r]^-{} & \SC(X, \alpha) \ar[r]^-{} & \mathcal{T}(X, \alpha) \ar[r]^-{\sigma}  & C(X) \ar[r]^-{} & 0}
\]
restricts to an exact sequence of algebras
\[
\xymatrix@C=30pt
{0 \ar[r]^-{} & \SC^1(X, \alpha) \ar[r]^-{} & \mathcal{T}^1(X, \alpha) \ar[r]^-{\sigma}  & C^1(X, \alpha) \ar[r]^-{} & 0},
\]
and the linear splitting $\xi(\phi) = T_\phi$ restricts as well, implying that every element of $\mathcal{T}^1(X, \alpha)$ can be uniquely
written in the form $T_\phi + S$ for some $\phi$ in $C^1(X, \alpha)$ and $S$ in $\SC^1(X, \alpha)$.
Define $\normoplus{\cdot}$ on $\mathcal{T}^1(X, \alpha)$ by
\[
\normoplus{T_\phi + S} = \normCone{\phi} + \normSCone{S}.
\]
This makes $\mathcal{T}^1(X, \alpha)$ into a Banach space and gives us an exact sequence of Banach spaces.  We will replace
$\normoplus{\cdot}$ with an equivalent Banach algebra norm.  To this end, we first need a lemma.

\begin{lemma}\label{four} For all $T$ and $W$ in $\mathcal{T}^1(X, \alpha)$, 
\[
\normoplus{TW} \leq 4\normoplus{T}\normoplus{W}.
\]
\end{lemma}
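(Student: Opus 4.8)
The plan is to write $T = T_\phi + S$ and $W = T_\psi + R$ using the unique decomposition of elements of $\mathcal{T}^1(X,\alpha)$, expand the product exactly as in the proof of Proposition \ref{algebra},
\[
TW = T_{\phi\psi} + \bigl[(T_\phi T_\psi - T_{\phi\psi}) + T_\phi R + ST_\psi + SR\bigr],
\]
and then estimate term by term. Since the symbol of $TW$ is $\phi\psi$ and its $\SC^1(X,\alpha)$ part is the bracketed expression, the definition of $\normoplus{\cdot}$ gives
\[
\normoplus{TW} = \normCone{\phi\psi} + \normSCone{(T_\phi T_\psi - T_{\phi\psi}) + T_\phi R + ST_\psi + SR},
\]
and I would bound the two summands separately.

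For the symbol term I use that $C^1(X,\alpha)$ is a Banach algebra, so $\normCone{\phi\psi} \leq \normCone{\phi}\normCone{\psi}$. For the $\SC^1$ term I apply the triangle inequality and estimate the four pieces. The two mixed terms are controlled directly by Proposition \ref{SC1}, namely $\normSCone{T_\phi R} \leq \normCone{\phi}\normSCone{R}$ and $\normSCone{ST_\psi} \leq \normSCone{S}\normCone{\psi}$, while the submultiplicativity of $\normSCone{\cdot}$ gives $\normSCone{SR} \leq \normSCone{S}\normSCone{R}$.

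The one piece requiring extra care is the semicommutator $T_\phi T_\psi - T_{\phi\psi}$, precisely because $\normSCone{\cdot}$ involves both the trace norm and the operator norm. Theorem \ref{semi} supplies the trace-norm estimate $\normone{T_\phi T_\psi - T_{\phi\psi}} \leq \normCone{\phi}\normCone{\psi}$, but that result says nothing about the operator norm, so I would add the crude bound $\normop{T_\phi T_\psi - T_{\phi\psi}} \leq \normop{T_\phi}\normop{T_\psi} + \normop{T_{\phi\psi}} \leq 2\normsup{\phi}\normsup{\psi} \leq 2\normCone{\phi}\normCone{\psi}$, using $\normop{T_\phi} = \normsup{\phi}$. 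Together these yield $\normSCone{T_\phi T_\psi - T_{\phi\psi}} \leq 3\normCone{\phi}\normCone{\psi}$.

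Summing the five estimates, the coefficient of $\normCone{\phi}\normCone{\psi}$ totals $4$ (one from the symbol term and three from the semicommutator), while $\normCone{\phi}\normSCone{R}$, $\normSCone{S}\normCone{\psi}$, and $\normSCone{S}\normSCone{R}$ each occur with coefficient $1$. Comparing with the expansion $\normoplus{T}\normoplus{W} = \bigl(\normCone{\phi}+\normSCone{S}\bigr)\bigl(\normCone{\psi}+\normSCone{R}\bigr)$ then gives $\normoplus{TW} \leq 4\normoplus{T}\normoplus{W}$. The argument is essentially bookkeeping once Theorem \ref{semi} and Proposition \ref{SC1} are in hand; the only non-automatic input is the operator-norm estimate on the semicommutator, and it is exactly this term that inflates the constant from $1$ to $4$.
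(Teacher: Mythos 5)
Your proof is correct and follows essentially the same route as the paper: the same decomposition $T = T_\phi + S$, $W = T_\psi + R$, the same term-by-term estimates via Theorem \ref{semi} and Proposition \ref{SC1}, and the same operator-norm bound $\normop{T_\phi T_\psi - T_{\phi\psi}} \leq 2\normCone{\phi}\normCone{\psi}$ that pushes the semicommutator constant to $3$ and hence the overall constant to $4$. Nothing is missing.
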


\begin{proof}
Write $T = T_\phi + S$ and $W = T_\psi + R$.  Then using Proposition \ref{SC1} and the equation from the proof of Proposition \ref{algebra},
we have
\begin{align*}
\normoplus{TW} &= 
\normoplus{T_{\phi\psi} + \bigl[(T_\phi T_\psi - T_{\phi\psi}) + T_\phi R + ST_\psi + SR\bigr]}\\
&= \normCone{\phi\psi} + \normSCone{(T_\phi T_\psi - T_{\phi\psi}) + T_\phi R + ST_\psi + SR} \\
&\leq \normCone{\phi}\normCone{\psi} + \normSCone{T_\phi T_\psi - T_{\phi\psi}} 
+ \normSCone{T_\phi R} \\
&{\hskip 180pt} + \normSCone{ST_\psi}  + \normSCone{SR}  \\
&\leq \normCone{\phi}\normCone{\psi} + \normSCone{T_\phi T_\psi - T_{\phi\psi}} 
+ \normCone{\phi}\normSCone{R}  \\
&{\hskip 160pt} + \normSCone{S}\normCone{\psi}  + \normSCone{S}\normSCone{R}.
\end{align*}

By Theorem \ref{semi},
\begin{align*}
\normSCone{T_\phi T_\psi - T_{\phi\psi}} &= \normone{T_\phi T_\psi - T_{\phi\psi}} + \normop{T_\phi T_\psi - T_{\phi\psi}} \\
&\leq \normCone{\phi}\normCone{\psi} + \normop{T_\phi}\normop{T_\psi} + \normop{T_{\phi\psi}} \\
&\leq \normCone{\phi}\normCone{\psi} + \normCone{\phi}\normCone{\psi}  + \normCone{\phi\psi} \\
&\leq 3\normCone{\phi}\normCone{\psi}.
\end{align*}

Therefore 
\begin{align*}
\normoplus{TW} &\leq 4\normCone{\phi}\normCone{\psi} + 
\normCone{\phi}\normSCone{R}  + \normSCone{S}\normCone{\psi}  + \normSCone{S}\normSCone{R} \\
&\leq 4\bigl(\normCone{\phi}\normCone{\psi} + 
\normCone{\phi}\normSCone{R}  + \normSCone{S}\normCone{\psi}  + \normSCone{S}\normSCone{R}\bigr) \\
&= 4\normoplus{T}\normoplus{W}.
\end{align*}
\end{proof}

\begin{proposition}\label{BA1} The formula
\[
\normTone{T} := \sup\{\normoplus{TX} : \normoplus{X} \leq 1\}
\]
defines a Banach algebra norm on $\mathcal{T}^1(X, \alpha)$ with the property that
\[
\normoplus{T} \leq \normTone{T} \leq 4\normoplus{T},
\]
and $\normTone{\cdot}$ makes 
\[
\xymatrix@C=30pt
{0 \ar[r]^-{} & \SC^1(X, \alpha) \ar[r]^-{} & \mathcal{T}^1(X, \alpha) \ar[r]^-{\sigma}  & C^1(X) \ar[r]^-{} & 0}
\]
into an exact sequence of Banach algebras.
\end{proposition}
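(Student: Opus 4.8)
The plan is to recognize $\normTone{\cdot}$ as the operator norm of the left-regular representation of $\mathcal{T}^1(X, \alpha)$ on itself, a standard device for replacing a norm that is submultiplicative only up to a constant---here the constant $4$ from Lemma \ref{four}---by a genuinely submultiplicative one. The essential structural fact I will exploit is that $\mathcal{T}^1(X, \alpha)$ is unital, with identity element $T_1$, the Toeplitz operator of the constant symbol $1$; since $1$ has vanishing derivative along the flow, its decomposition as $T_1 + 0$ gives $\normoplus{T_1} = \normCone{1} + \normSCone{0} = 1 + 0 = 1$. This normalization of the unit is what makes the lower bound come out with constant exactly $1$.

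First I would check that $\normTone{\cdot}$ is finite and establish the two-sided estimate. For the upper bound, Lemma \ref{four} gives $\normoplus{TX} \leq 4\normoplus{T}\normoplus{X} \leq 4\normoplus{T}$ whenever $\normoplus{X} \leq 1$, so taking the supremum yields $\normTone{T} \leq 4\normoplus{T}$, and in particular $\normTone{T} < \infty$. For the lower bound I would take $X = T_1$, which satisfies $\normoplus{X} = 1$; since $T_1$ is the identity, $TX = T$, so $\normoplus{T} = \normoplus{TT_1}$ is among the quantities over which the supremum is taken, giving $\normoplus{T} \leq \normTone{T}$.

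Next I would verify the norm axioms and submultiplicativity. Homogeneity and the triangle inequality pass directly through the supremum, and positive-definiteness follows from the lower estimate $\normoplus{T} \leq \normTone{T}$ already obtained. The key observation for the algebra property is the scaling inequality $\normoplus{TY} \leq \normTone{T}\normoplus{Y}$, valid for every $Y$ (obtained by applying the definition to $Y\slash\normoplus{Y}$ when $Y \neq 0$, and trivial when $Y = 0$). Applying this twice, for $\normoplus{X} \leq 1$ one gets $\normoplus{(TW)X} = \normoplus{T(WX)} \leq \normTone{T}\normoplus{WX} \leq \normTone{T}\normTone{W}\normoplus{X} \leq \normTone{T}\normTone{W}$, and taking the supremum gives $\normTone{TW} \leq \normTone{T}\normTone{W}$. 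Completeness is immediate: by the two-sided estimate $\normTone{\cdot}$ and $\normoplus{\cdot}$ are equivalent, and $\mathcal{T}^1(X, \alpha)$ is already complete under $\normoplus{\cdot}$, so it is a Banach algebra under $\normTone{\cdot}$.

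Finally, to upgrade the exact sequence to one of Banach algebras I would note that algebraic exactness, together with the fact that $\sigma$ and the inclusion are homomorphisms, is already in hand, so only continuity of the two maps remains. The symbol map is contractive because $\normCone{\sigma(T_\phi + S)} = \normCone{\phi} \leq \normCone{\phi} + \normSCone{S} = \normoplus{T_\phi + S} \leq \normTone{T_\phi + S}$, and the inclusion of $\SC^1(X, \alpha)$ is bounded because on that ideal $\normoplus{S} = \normSCone{S}$, whence $\normTone{S} \leq 4\normSCone{S}$; in fact the two norms are equivalent there, so the inclusion is a topological embedding. There is no serious obstacle in this argument; the only point demanding care is the verification that the unit satisfies $\normoplus{T_1} = 1$, without which the lower estimate would carry an extra constant and the clean inequality $\normoplus{T} \leq \normTone{T}$ would fail.
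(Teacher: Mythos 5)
Your proof is correct and takes essentially the same route as the paper's: the lower bound $\normoplus{T} \leq \normTone{T}$ is obtained by testing against the unit $I = T_1$ (whose $\normoplus{\cdot}$-norm is $1$), and the upper bound $\normTone{T} \leq 4\normoplus{T}$ comes from Lemma \ref{four}. The additional verifications you supply --- submultiplicativity via the scaling inequality $\normoplus{TY} \leq \normTone{T}\normoplus{Y}$, completeness via norm equivalence, and continuity of $\sigma$ and of the inclusion of $\SC^1(X, \alpha)$ --- are precisely what the paper leaves as ``straightforward to verify,'' and your details are sound.
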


\begin{proof}
We check the inequalities hold; everything else is straightforward to verify.  On one hand,
\[
\normTone{T} \geq \normoplus{TI} = \normoplus{T},
\]
and on the other hand, from Lemma \ref{four},
\[
\normTone{T} = \sup\{\normoplus{TX} : \normoplus{X} \leq 1\} \leq \sup\{4\normoplus{T}\normoplus{X} : \normoplus{X} \leq 1\} = 4\normoplus{T}.
\]
\end{proof}

We record one more result that we will need later in the paper.

\begin{theorem}\label{traceformula} For $T$ and $W$ in $\mathcal{T}^1(X, \alpha)$, the additive commutator $[T, W]$ is in $L^1(\tau)$, and
\[
\tau[T, W] = -\frac{1}{2\pi i}\int_X \sigma(T)^\prime(x)\sigma(W)(x)\, d\mu(x).
\]
\end{theorem}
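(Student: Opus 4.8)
The plan is to reduce the statement to a commutator of pure Toeplitz operators and then to evaluate that commutator's trace at the level of symbols. Using the canonical form of Definition \ref{SC1T1}, write $T = T_\phi + S$ and $W = T_\psi + R$ with $\phi = \sigma(T)$ and $\psi = \sigma(W)$ in $C^1(X,\alpha)$ and with $S, R$ in $\SC^1(X,\alpha) = \SC(X,\alpha)\cap L^1(\tau)$. Expanding the commutator gives
\[
[T, W] = [T_\phi, T_\psi] + [T_\phi, R] + [S, T_\psi] + [S, R].
\]
Because $\phi\psi = \psi\phi$ in $C(X)$, the first term is a difference of semicommutators,
\[
[T_\phi, T_\psi] = (T_\phi T_\psi - T_{\phi\psi}) - (T_\psi T_\phi - T_{\phi\psi}),
\]
so Theorem \ref{semi} puts it in $L^1(\tau)$; the other three terms lie in $L^1(\tau)$ because $S$ and $R$ do and $L^1(\tau)$ is an ideal. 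Hence $[T,W]$ is in $L^1(\tau)$.

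I would then use the trace property $\tau(AB) = \tau(BA)$, valid for $A$ in $L^1(\tau)$ and $B$ in $L^\infty(X)\rtimes\mathbb{R}$, to kill the three mixed terms: each of $[T_\phi, R]$, $[S, T_\psi]$, and $[S, R]$ is a commutator of a trace-class operator with a bounded one and therefore has vanishing trace. This leaves
\[
\tau[T, W] = \tau(T_\phi T_\psi - T_{\phi\psi}) - \tau(T_\psi T_\phi - T_{\phi\psi}),
\]
so the problem is reduced to evaluating the trace of a single semicommutator.

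For that evaluation I would return to the matrix identity from the proof of Theorem \ref{semi}, namely $T_\phi T_\psi - T_{\phi\psi} = -(PM_\phi Q)(QM_\psi P)$, together with the factorizations $PM_\phi Q = \tfrac12 P[H, M_\phi]$ and $QM_\psi P = -\tfrac12 P[H, M_\psi]$ recorded there. Since $[H, M_\phi]$ and $[H, M_\psi]$ lie in $L^2(\tau)$ by Lemma 22.1 of \cite{CMX} and are represented, up to the factor $\tfrac1i$, by the integral kernels $k_\phi$ and $k_\psi$ of the proof of Theorem \ref{semi}, I would compute each semicommutator trace by composing these kernels and applying the normalization $\tau(f) = \int_X f(x, 0)\, d\mu(x)$. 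Upon subtracting the two semicommutator traces the symmetric contribution cancels, and the surviving antisymmetric part should collapse to $-\tfrac{1}{2\pi i}\int_X \phi'(x)\psi(x)\, d\mu(x)$.

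The main obstacle is precisely this last step: assembling the principal-value convolution of the two commutator kernels coming from the Hilbert transform and extracting its value on the diagonal $t = 0$ is the delicate trace computation of Curto, Muhly, and Xia, and the cleanest route is to invoke their trace formula rather than to redo the principal-value analysis by hand. The remaining bookkeeping is light: one checks that the chosen normalization of $\tau$ yields the constant $-\tfrac{1}{2\pi i}$, and one uses the $\alpha$-invariance of $\mu$ — which gives $\int_X \chi'\, d\mu = 0$, hence $\int_X \phi'\psi\, d\mu = -\int_X \phi\psi'\, d\mu$ — to confirm that the formula is correctly antisymmetric under interchanging $T$ and $W$.
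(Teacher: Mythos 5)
Your proposal is correct and follows essentially the same route as the paper: decompose $T = T_\phi + S$, $W = T_\psi + R$, observe that the trace of the three terms involving $S$ or $R$ vanishes since those operators lie in $L^1(\tau)$, and then invoke the Curto--Muhly--Xia trace formula (Theorem 23.1 of \cite{CMX}) for $\tau[T_\phi, T_\psi]$. Your intermediate musings about redoing the kernel computation are unnecessary, as you yourself conclude; citing their trace formula is exactly what the paper does.
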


\begin{proof}
Write $T = T_\phi + S$ and $W = T_\psi + R$. Then
\[
[T, W] =  [T_\phi, T_\psi] + [T_\phi, R] + [S, T_\psi] + [S, R].
\]
The operators $S$ and $R$ are in $L^1(\tau)$, whence $\tau([T, W]) = \tau([T_\phi, T_\psi])$.  Theorem 23.1 in \cite{CMX} states that
\[
\tau[T_\phi, T_\psi] = -\frac{1}{2\pi i}\int_X \phi^\prime(x)\psi(x)\, d\mu(x).
\]
Because $\sigma(T) = \phi$ and $\sigma(W) = \psi$, we obtain the desired result.
\end{proof}

\section{$K$-Theory Preliminaries}\label{K}
Here we will describe the definitions and results we need from \cite{HKS}.  
We will employ various flavors of $K$-theory in this section; 
will denote operator algebra $K$-theory by $\K{top}$, and $\K{alg}$ will be algebraic $K$-theory.  

\begin{definition}\label{relativeBA}
Let $A$ be a unital Banach algebra with norm $\normA{\cdot}$
and let $J$ be a not necessarily closed ideal in $A$. We say that $(A, J)$
is a \emph{relative pair of Banach algebras} if there exists a norm $\normJ{\cdot}$ on $J$ such that
\begin{enumerate}
\item the ideal $J$ is a Banach algebra in the norm $\normJ{\cdot}$;
\item for all $j$ in $J$,
\[
\normJ{j} \leq \normA{j};
\]
\item for all $a$ and $b$ in $A$ and $j$ in $J$,
\[
\normJ{ajb} \leq \normA{a}\normJ{j}\normA{b}.
\]
\end{enumerate}
A morphism between relative Banach pairs $(A, J)$ and $(\widetilde{A}, \widetilde{J})$ is a continuous algebra map 
$\omega: (A, \normA{\cdot}) \rightarrow (\widetilde{A}, \normAt{\cdot})$ that restricts to a continuous map
$\omega_{| J}: (J, \normJ{\cdot}) \rightarrow (\widetilde{J}, \normJt{\cdot})$.
\end{definition}

If $(A, J)$ is a relative pair of Banach algebras, then $(\M(n, A), \M(n, J))$ is also a relative pair of Banach algebras if we define
\[
\normnA{a} = 
\sum_{k, \ell = 1}^n \normA{a_{k\ell}},
\]
and similarly define $\normnJ{j}$ for $j$ in $\M(n, J)$.

For each natural number $n$, define
\begin{align*}
\GL(n, J) &= \{G \in \GL(n, J^+) : G - I_n \in \M(n, J)\} \\
[\GL(n, J), \GL(n, A)] &= \left\{ GHG^{-1}H^{-1} : G \in \GL(n, J), H \in \GL(n, A)\right\}.
\end{align*}
Then $[\GL(n, J), \GL(n, A)]$ is a normal subgroup of $\GL(n, J)$ for each natural number $n$, and
\[
\K{alg}_1(A, J) = \lim_{n\to\infty}\frac{\GL(n, J)}{[\GL(n, J), \GL(n, A)]}.
\]

Let $\R(n, J)$ denote the set of smooth paths $\gamma: [0, 1] \rightarrow \GL(n, J)$ with the property that $\gamma(0) = 1$,
and similarly define $\R(n, A)$.  These sets are groups under pointwise multiplication, and
\[
[\R(n, J), \R(n, A)] = \{\gamma\beta\gamma^{-1}\beta^{-1} : \gamma \in \R(n, J), \beta \in \R(n, A)\}
\]
is a normal subgroup of $\R(n, J)$.   Define an equivalence relation $\sim$ on $\R(n, J)$ by decreeing
that $\gamma_0 \sim  \gamma_1$ if there exists a smooth homotopy $\{\gamma_t\}$ from $\gamma_0$ to
$\gamma_1$ such that $\gamma_t(0) = \gamma_0(0)$ and $\gamma_t(1) = \gamma_0(1)$
for all $0 \leq t \leq 1$.
Let $q$ denote the quotient map from $\R(n, J)$ to the set of equivalence classes of $\sim$, and set
\[
\K{rel}_1(A, J) = \lim_{n\to\infty}\frac{q(\R(n, J))}{q([\R(n, J), \R(n, A)])}.
\]

These four groups fit into an exact sequence
\[
\xymatrix@C=30pt
{\K{top}_0(J) \ar[r]^-{\partial} & \K{rel}_1(A, J) \ar[r]^-{\theta} 
& \K{alg}_1(A, J) \ar[r]^-{p}  & \K{top}_1(J) \ar[r]^-{} & 0,}
\]
with $\theta[\gamma] = [\gamma(1)^{-1}]$ and $p[g] = [g]$.
This exact sequence is a piece of a long exact sequence defined and studied in \cite{Kar} and \cite{CK}.
\vskip 6pt

Many would call the group $\K{alg}_1(A, J)$ 
the first relative algebraic $K$-theory group associated to the pair $(A, J)$.  However, in \cite{Kar} and \cite{CK}
 the authors use the word ``relative'' in the sense that their groups $\K{rel}_*$ measure the 
difference between topological and algebraic $K$-theory.  This double use of the word ``relative''
is unfortunate, but seems to be well established at this point.
\vskip 6pt

Suppose that $J$ admits a continuous linear functional $\tau: J \rightarrow \mathbb{C}$ with the property that
\[
\tau(ja) = \tau(aj)
\]
for all $j$ in $J$ and $a$ in $A$; the authors of \cite{HKS} call this a \emph{hypertrace}.  
Associated to $\tau$ is a group homomorphism $\widetilde{\tau}$ from $\K{rel}_1(A, J)$ to $\mathbb{C}$ that is defined in the following way: 
let $\gamma$ be an element of $\R(1, J)$ and let $[\gamma]$ be the corresponding element of $\K{rel}_1(A, J)$.  Then
\[
\widetilde{\tau}[\gamma] = -\tau\left(\int_0^1\gamma^\prime(t)\gamma(t)^{-1}\, dt\right)
= -\int_0^1\tau\left(\gamma^\prime(t)\gamma(t)^{-1}\right)\, dt.
\]

\begin{definition}\label{HSdet}
Let $\underline{\tau}: \K{top}_0(J) \rightarrow \mathbb{C}$ be the group homomorphism induced by $\tau$.
The \emph{relative de la Harpe-Skandalis determinant} associated to $\tau$ is the group homomorphism 
\[
\widetilde{\det}_\tau: \operatorname{im}(\theta) = \ker(p) \longrightarrow 
\frac{\mathbb{C}}{2\pi i\cdot \operatorname{im}(\underline{\tau})}
\]
that is defined as follows.  Suppose $g$ in $\GL(n, J)$ has the property that its class $[g]$ in $\K{alg}_1(A, J)$
is in the image of $\theta$.  Choose $\beta$ in $\R(n, J)$ so that $\beta(1) = g^{-1}$.  Then
\[
\widetilde{\det}_\tau[g] = \widetilde{\tau}[\beta] + 2\pi i \cdot\operatorname{im}(\underline{\tau}).
\]
\end{definition}

Recall our semifinite von Neumann algebra $\mathcal{N}$ from Section \ref{Toeplitz}.  The trace $\tau$ is a hypertrace,
and the pair $(\mathcal{N}, L^1(\tau))$ is a relative pair of Banach algebras.
The group $\K{top}_1(L^1(\tau))$ is trivial (\cite{HKS}, Lemma 6.4), whence $\ker(p) 
= \K{alg}_1(\mathcal{N}, L^1(\tau))$.  Because the trace of any projection in $\M(n, \mathcal{N})$
is real, we see that $\underline{\tau}(\K{top}_0(L^1(\tau)))$ is contained in $\mathbb{R}$.  Therefore, by expanding
the codomain of $\widetilde{\det}_\tau$,  we have the group homomorphism
\[
\widetilde{\det}_\tau: \K{alg}_1(\mathcal{N}, L^1(\tau)) \rightarrow \mathbb{C}\slash(2\pi i\cdot\mathbb{R}) = 
\mathbb{C}\slash i\mathbb{R}.
\]
Observe that the map $z + i\mathbb{R} \mapsto e^{\Re(z)}$ is a group isomorphism from $\mathbb{C}\slash i\mathbb{R}$
to $(0, \infty)$.  Composing this isomorphism with $\widetilde{\det}_\tau$, we arrive at the following definition.

\begin{definition}\label{FKdet}
The \emph{semifinite Fuglede-Kadison determinant} for $(\mathcal{N}, L^1(\tau))$ is the group homomorphism
\[
{\det}_\tau:  \K{alg}_1(\mathcal{N}, L^1(\tau)) \longrightarrow (0, \infty)
\]
given by the formula
\[
{\det}_\tau [g] = \exp\left(\Re(\widetilde{\det}_\tau[g])\right).
\]
\end{definition}

\begin{proposition}[\cite{HKS}, Proposition 7.5]\label{props}
The semifinite Fuglede-Kadison determinant enjoys the following properties:
\begin{enumerate}
\item ${\det}_\tau[I] = 1$; 
\vskip 6pt
\item ${\det}_\tau[g_1g_2] = {\det}_\tau[g_1]\cdot {\det}_\tau[g_2]$ for all $g_1, g_2$ in $\GL(n, L^1(\tau))$;
\vskip 6pt
\item ${\det}_\tau[hgh^{-1}] = {\det}_\tau[g]$ for all $g$ in $\GL(n, L^1(\tau))$ and $h$ in $\GL(n, \mathcal{N})$.
\end{enumerate}
\end{proposition}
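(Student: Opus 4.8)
The plan is to derive all three properties from a single structural observation: the map ${\det}_\tau$ is a group homomorphism from $\K{alg}_1(\mathcal{N}, L^1(\tau))$ to the multiplicative group $(0, \infty)$. Indeed, by Definition \ref{HSdet} the relative de la Harpe--Skandalis determinant $\widetilde{\det}_\tau$ is a group homomorphism, and since $\K{top}_1(L^1(\tau))$ vanishes its domain is all of $\K{alg}_1(\mathcal{N}, L^1(\tau))$. The formula in Definition \ref{FKdet} exhibits ${\det}_\tau$ as the composite of $\widetilde{\det}_\tau$ with the isomorphism $z + i\mathbb{R} \mapsto e^{\Re(z)}$ from $\mathbb{C}\slash i\mathbb{R}$ to $(0, \infty)$; as a composite of two group homomorphisms, ${\det}_\tau$ is itself a group homomorphism. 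Once this is in hand, properties (1) and (2) are essentially immediate. For (1), a homomorphism carries the identity of $\K{alg}_1(\mathcal{N}, L^1(\tau))$, namely the class $[I]$, to the identity $1$ of $(0, \infty)$; concretely $\widetilde{\det}_\tau[I] \in i\mathbb{R}$, so $\Re(\widetilde{\det}_\tau[I]) = 0$ and ${\det}_\tau[I] = e^0 = 1$.

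For (2), I would first note that the canonical map $\GL(n, L^1(\tau)) \rightarrow \K{alg}_1(\mathcal{N}, L^1(\tau))$, $g \mapsto [g]$, is a group homomorphism, being the composite of the quotient by $[\GL(n, L^1(\tau)), \GL(n, \mathcal{N})]$ with the colimit map. Hence $[g_1 g_2] = [g_1][g_2]$ for $g_1, g_2$ in $\GL(n, L^1(\tau))$, and applying the homomorphism ${\det}_\tau$ gives ${\det}_\tau[g_1 g_2] = {\det}_\tau[g_1]\,{\det}_\tau[g_2]$.

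Property (3) is the only point that is not purely formal, and it rests on the way $\K{alg}_1$ is defined. Given $g$ in $\GL(n, L^1(\tau))$ and $h$ in $\GL(n, \mathcal{N})$, I would first check that $hgh^{-1}$ lies in $\GL(n, L^1(\tau))$: it belongs to $\GL(n, (L^1(\tau))^+)$ and $hgh^{-1} - I_n = h(g - I_n)h^{-1}$ lies in $\M(n, L^1(\tau))$ because $L^1(\tau)$ is an ideal in $\mathcal{N}$, so that the class $[hgh^{-1}]$ is defined. Now the commutator $ghg^{-1}h^{-1}$ lies in the normal subgroup $[\GL(n, L^1(\tau)), \GL(n, \mathcal{N})]$ by definition, and since that set is a subgroup it is closed under inversion; as $hgh^{-1}g^{-1} = (ghg^{-1}h^{-1})^{-1}$, we conclude $hgh^{-1}g^{-1}$ lies in $[\GL(n, L^1(\tau)), \GL(n, \mathcal{N})]$, whence $[hgh^{-1}] = [g]$ in $\K{alg}_1(\mathcal{N}, L^1(\tau))$. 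Applying ${\det}_\tau$ yields ${\det}_\tau[hgh^{-1}] = {\det}_\tau[g]$. The genuine content of the proposition therefore lives entirely in Definition \ref{HSdet}, namely the assertion imported from \cite{HKS} that $\widetilde{\det}_\tau$ is a well-defined homomorphism; the main obstacle, were one to prove that from scratch, would be establishing independence of $\widetilde{\det}_\tau[g]$ from the choice of path $\beta$ with $\beta(1) = g^{-1}$. Granting that, the present proposition is a short formal consequence of the homomorphism property together with the commutator bookkeeping above.
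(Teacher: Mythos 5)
Your proof is correct. Be aware, though, that there is no in-paper proof to compare against: the proposition is imported wholesale from \cite{HKS} (Proposition 7.5), which is why the statement carries that bracketed citation. Your derivation supplies exactly the mechanism that the citation stands for, and which the paper's introduction advertises when it says that in the Hochs--Kaad--Schemaitat approach ``the standard properties of a determinant follow directly from algebraic properties of the groups and maps'': properties (1) and (2) are formal consequences of ${\det}_\tau$ being a group homomorphism on $\K{alg}_1(\mathcal{N}, L^1(\tau))$ precomposed with the canonical homomorphism $g \mapsto [g]$ from $\GL(n, L^1(\tau))$, and property (3) holds because the relative commutator subgroup $[\GL(n, L^1(\tau)), \GL(n, \mathcal{N})]$ is quotiented out in the very definition of $\K{alg}_1(\mathcal{N}, L^1(\tau))$, so conjugation by an element of $\GL(n, \mathcal{N})$ does not change classes; your preliminary check that $hgh^{-1} - I_n = h(g - I_n)h^{-1}$ lies in $\M(n, L^1(\tau))$, so that $[hgh^{-1}]$ is even defined, is a detail worth having spelled out. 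You also correctly locate the one nonformal ingredient --- the well-definedness (path-independence) and homomorphism property of $\widetilde{\det}_\tau$ --- which is the genuine content of \cite{HKS} and which Definition \ref{HSdet} licenses you to take as given.
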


\section{Determinant computations in $\mathcal{T}^1(X, \alpha)$} \label{Tdet}

\begin{proposition}\label{relpair} Let $\mathcal{T}^1(X, \alpha)$ and $\SC^1(X, \alpha)$ be equipped with the Banach algebra norms $\normTone{\cdot}$
and $\normSCone{\cdot}$ respectively.  Then $(\mathcal{T}^1(X, \alpha), \SC^1(X, \alpha))$ is a relative pair of Banach algebras.
\end{proposition}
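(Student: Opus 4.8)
The plan is to verify the three conditions of Definition \ref{relativeBA} for the pair $(A, J) = (\mathcal{T}^1(X, \alpha), \SC^1(X, \alpha))$ with $\normA{\cdot} = \normTone{\cdot}$ and $\normJ{\cdot} = \normSCone{\cdot}$. Condition (1) requires no new work: Proposition \ref{SC1} already asserts that $\SC^1(X, \alpha)$ is a Banach algebra under $\normSCone{\cdot}$. The single observation that drives the remaining two conditions is that $\normSCone{\cdot}$ and $\normoplus{\cdot}$ coincide on the ideal, since for $S$ in $\SC^1(X, \alpha)$ we have $\normoplus{S} = \normCone{0} + \normSCone{S} = \normSCone{S}$. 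Condition (2) is then immediate: for $S$ in $\SC^1(X, \alpha)$ I would write $\normSCone{S} = \normoplus{S} \leq \normTone{S}$, where the inequality is the left-hand bound from Proposition \ref{BA1}.

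Condition (3) is where the real work lies; I must establish $\normSCone{TST'} \leq \normTone{T}\normSCone{S}\normTone{T'}$ for $T, T'$ in $\mathcal{T}^1(X, \alpha)$ and $S$ in $\SC^1(X, \alpha)$. The left factor is handled directly by the definition: unwinding $\normTone{T} = \sup\{\normoplus{TX} : \normoplus{X} \leq 1\}$ and rescaling gives $\normoplus{TY} \leq \normTone{T}\normoplus{Y}$ for every $Y$ in $\mathcal{T}^1(X, \alpha)$. Since $ST'$ and $TST'$ both lie in the ideal (Proposition \ref{algebra}), applying this with $Y = ST'$ yields $\normSCone{TST'} = \normoplus{TST'} \leq \normTone{T}\normoplus{ST'}$.

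The main obstacle is the right factor, because $\normTone{\cdot}$ is defined asymmetrically through left multiplication and so does not directly control multiplication on the right. Here I would exploit the fact that one factor already lies in the ideal: writing $T' = T_\psi + R$ with $\psi$ in $C^1(X, \alpha)$ and $R$ in $\SC^1(X, \alpha)$, the product $ST' = ST_\psi + SR$ contains no semicommutator cross-term, so the factor of $4$ appearing in Lemma \ref{four} does not intrude. Combining the module inequality $\normSCone{ST_\psi} \leq \normSCone{S}\normCone{\psi}$ with the submultiplicativity $\normSCone{SR} \leq \normSCone{S}\normSCone{R}$, both from Proposition \ref{SC1}, I would obtain
\[
\normoplus{ST'} \leq \normSCone{ST_\psi} + \normSCone{SR} \leq \normSCone{S}\bigl(\normCone{\psi} + \normSCone{R}\bigr) = \normSCone{S}\normoplus{T'} \leq \normSCone{S}\normTone{T'},
\]
the last step again invoking Proposition \ref{BA1}. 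Chaining this with the left-factor estimate gives $\normSCone{TST'} \leq \normTone{T}\normSCone{S}\normTone{T'}$, which is condition (3) and completes the verification.
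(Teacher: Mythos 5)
Your proof is correct, and its skeleton matches the paper's: both verify the three conditions of Definition \ref{relativeBA}, cite Proposition \ref{SC1} for condition (1), obtain condition (2) from Proposition \ref{BA1} together with the fact (explicit in your write-up, implicit in the paper's) that $\normoplus{\cdot}$ and $\normSCone{\cdot}$ coincide on the ideal, and reduce condition (3) to two one-sided estimates that are then chained. Where you genuinely diverge is in how those one-sided estimates are proved. The paper proves $\normSCone{TR} \leq \normoplus{T}\normSCone{R}$ by writing $T = T_\phi + S$ and estimating $\normop{(T_\phi + S)R} + \normone{(T_\phi + S)R}$ directly, using submultiplicativity of the operator norm and Proposition \ref{opineq}, and then declares the right-sided inequality ``similar.'' You instead notice that the left-sided estimate costs nothing: since $\normTone{\cdot}$ is by construction the operator norm of left multiplication on $(\mathcal{T}^1(X, \alpha), \normoplus{\cdot})$, rescaling the supremum gives $\normoplus{TY} \leq \normTone{T}\normoplus{Y}$ for all $Y$, with no decomposition of $T$ at all; only the right-sided estimate requires writing $T' = T_\psi + R$, and even there you recycle the module inequality $\normSCone{ST_\psi} \leq \normSCone{S}\normCone{\psi}$ and the submultiplicativity of $\normSCone{\cdot}$ already recorded in Proposition \ref{SC1}, rather than redoing norm computations from scratch. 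The trade-off: your route is more economical and exposes the asymmetry built into the definition of $\normTone{\cdot}$ (left multiplication is controlled automatically, so only right multiplication needs work), whereas the paper's computation is self-contained and displays the estimates concretely at the level of $\normop{\cdot}$ and $\normone{\cdot}$. Both arguments are complete.
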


\begin{proof} For $R$ in $\SC^1(X, \alpha)$, we have $\normSCone{R} \leq \normTone{R}$ by Proposition \ref{BA1}.  We need to prove that 
\[
\normSCone{TRW} \leq \normTone{T}\normSCone{R}\normTone{W}. 
\]
for all $R$ in $\SC^1(X, \alpha)$ and $T$ and $W$ in $\mathcal{T}^1(X, \alpha)$.  It suffices to establish the inequalities 
$\normSCone{TR} \leq \normTone{T}\normSCone{R}$ and $\normSCone{RW} \leq \normSCone{R}\normTone{W}$.  We will verify the first one; 
the proof of the second one is similar.  Thanks to Proposition \ref{BA1}, it is enough to show that $\normSCone{TR} \leq \normoplus{T}\normSCone{R}$.
Write $T = T_\phi +S$.   Then
\begin{align*}
\normSCone{TR} &= \normSCone{(T_\phi + S)R} \\
&= \normop{(T_\phi + S)R} + \normone{(T_\phi + S)R} \\
&\leq (\normop{T_\phi} + \normop{S})\normop{R} + (\normop{T_\phi} + \normop{S})\normone{R} \\
&\leq (\normCone{\phi} + \normop{S})\normSCone{R} \\
&\leq \normoplus{T}\normSCone{R}.
\end{align*}
\end{proof}

\begin{proposition}\label{morphism}
The inclusion map $\iota: (\mathcal{T}^1(X, \alpha), \SC^1(X, \alpha)) \rightarrow (\mathcal{N}, L^1(\tau))$ is a morphism of relative Banach pairs.
\end{proposition}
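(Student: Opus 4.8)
The plan is to verify the two defining conditions of a morphism of relative Banach pairs from Definition \ref{relativeBA}: that $\iota$ is a continuous algebra map $(\mathcal{T}^1(X, \alpha), \normTone{\cdot}) \rightarrow (\mathcal{N}, \normop{\cdot})$, and that it restricts to a continuous map $(\SC^1(X, \alpha), \normSCone{\cdot}) \rightarrow (L^1(\tau), \normone{\cdot})$. Since $\iota$ is an inclusion, linearity and multiplicativity are automatic, so the entire content of the proof reduces to confirming that $\iota$ lands in the claimed target algebras and to two norm comparisons.

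First I would confirm well-definedness of the two target inclusions. The restriction to the ideal is immediate from Definition \ref{SC1T1}: since $\SC^1(X, \alpha) = \SC(X, \alpha) \cap L^1(\tau)$, every $S$ in $\SC^1(X, \alpha)$ already lies in $L^1(\tau)$. For the ambient algebra, write a general element of $\mathcal{T}^1(X, \alpha)$ as $T_\phi + S$. Both $M_\phi$ and (since $L^1(\tau) \subseteq L^\infty(X) \rtimes \mathbb{R}$) the operator $S$ lie in $L^\infty(X) \rtimes \mathbb{R}$; under the identification of $\mathcal{B}(H^2(X \times \mathbb{R}))$ with $P\mathcal{B}(L^2(X \times \mathbb{R}))P$, these operators satisfy $PSP = S$ and $T_\phi = PM_\phi P$, so that $T_\phi + S = P(M_\phi + S)P$ lies in $P(L^\infty(X) \rtimes \mathbb{R})P = \mathcal{N}$. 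This makes $\iota$ an honest algebra map into $\mathcal{N}$.

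Next I would establish the two continuity estimates, both of which I expect to hold with constant $1$. For the ideal, the definition $\normSCone{S} = \normone{S} + \normop{S}$ immediately gives $\normone{\iota(S)} = \normone{S} \leq \normSCone{S}$. For the ambient map, writing $T = T_\phi + S$, I would bound
\[
\normop{\iota(T)} = \normop{T_\phi + S} \leq \normop{T_\phi} + \normop{S} = \normsup{\phi} + \normop{S} \leq \normCone{\phi} + \normSCone{S} = \normoplus{T},
\]
using $\normop{T_\phi} = \normsup{\phi}$ from the isometric splitting together with the elementary bounds $\normsup{\phi} \leq \normCone{\phi}$ and $\normop{S} \leq \normSCone{S}$. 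Proposition \ref{BA1} then supplies $\normoplus{T} \leq \normTone{T}$, so $\normop{\iota(T)} \leq \normTone{T}$ and $\iota$ is continuous.

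Since both estimates follow directly from the definitions of the norms and Proposition \ref{BA1}, there is no genuine obstacle here; the only point requiring a moment's care is the well-definedness of the target inclusion, namely the observation that elements of $\mathcal{T}^1(X, \alpha)$, being compressions by $P$ of elements of $L^\infty(X) \rtimes \mathbb{R}$, indeed lie in the compressed factor $\mathcal{N}$.
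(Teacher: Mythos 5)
Your proof is correct and takes essentially the same route as the paper's: the two continuity estimates you give --- $\normop{\iota(T_\phi + S)} \leq \normsup{\phi} + \normop{S} \leq \normCone{\phi} + \normSCone{S} = \normoplus{T_\phi + S}$ combined with Proposition \ref{BA1}, and $\normone{\iota(S)} = \normone{S} \leq \normSCone{S}$ on the ideal --- are exactly the paper's argument. Your additional check that the inclusion genuinely lands in $\mathcal{N} = P\left(L^\infty(X) \rtimes \mathbb{R}\right)P$ is a point the paper leaves implicit from its earlier identification of $\mathcal{B}(H^2(X \times \mathbb{R}))$ as a corner of $\mathcal{B}(L^2(X \times \mathbb{R}))$, so it is a harmless refinement rather than a different approach.
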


\begin{proof} Take $T_\phi + S$ in $\mathcal{T}^1(X, \alpha)$.  
\[
\normop{\iota(T_\phi + S)} \leq \normop{T_\phi} + \normop{S} = \normsup{\phi} + \normop{S} \leq \normCone{\phi} + \normSCone{S} = \normoplus{T_\phi + S}.
\]
The norms $\normoplus{\cdot}$ and $\normTone{\cdot}$ on $\mathcal{T}^1(X, \alpha)$ are equivalent, so $\iota: \mathcal{T}^1(X, \alpha) \rightarrow \mathcal{N}$
is continuous.  The restriction of $\iota$ to $\SC^1(X, \alpha)$ is also continuous, because 
\[
\normone{\iota(S)} = \normone{S} \leq \normSCone{S}.
\]
\end{proof}

Define a map $d: \GL(n, \SC^1(X, \alpha)) \rightarrow (0, \infty)$ in the following manner.
Suppose that $Q$ is an element in $\GL(n, \SC^1(X, \alpha))$.  Then $Q$ determines 
an element of $[Q]$ of $\K{alg}_1(\mathcal{T}^1(X, \alpha), \SC^1(X, \alpha))$ and 
$[\iota(Q)]$ is in $\K{alg}_1(\mathcal{N}, L^1(\tau))$.  Thus we can set
\[
d(Q) = {\det}_\tau(\iota[Q]).
\]
Thanks to Proposition \ref{props}, we see that the map $d$ has the following properties:
\begin{enumerate}
\item $d(I) = 1$;
\vskip 6pt
\item $d(Q_1Q_2) = d(Q_1)d(Q_2)$ for $Q_1$ and $Q_2$ in $\GL(n, \SC^1(X, \alpha))$;
\vskip 6pt
\item $d(GQG^{-1}) = d(Q)$ for $Q$ in $\GL(n, \SC^1(X, \alpha))$ and $G$ in $\GL(n, \mathcal{T}^1(X, \alpha))$.
\end{enumerate}

Therefore $d$ can be considered to be a determinant function.  
In this section we derive a formula for computing $d$ on certain multiplicative commutators.  While many of the results we prove in the section
hold for arbitrary $n$, henceforth we will restrict to the situation when $n = 1$.

Let $G$ and $H$ be elements of $\GL(1, \mathcal{T}^1(X, \alpha))$.  Then 
\[
\sigma(GHG^{-1}H^{-1}) = \sigma(G)\sigma(H)\sigma(G)^{-1}\sigma(H)^{-1} = 1,
\]
whence $GHG^{-1}H^{-1}$ is an element of $\GL(1, \SC^1(X, \alpha))$.

\begin{proposition}\label{multcomm}
The value of $d(GHG^{-1}H^{-1})$ depends only on $\sigma(G)$ and $\sigma(H)$.  
\end{proposition}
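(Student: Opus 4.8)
The plan is to prove the sharper well-definedness statement: if $G'$ and $H'$ lie in $\GL(1, \mathcal{T}^1(X,\alpha))$ with $\sigma(G') = \sigma(G)$ and $\sigma(H') = \sigma(H)$, then $d(G'H'G'^{-1}H'^{-1}) = d(GHG^{-1}H^{-1})$. Everything will rest on the three properties of $d$ (normalization, multiplicativity, and conjugation-invariance $d(GQG^{-1}) = d(Q)$) derived from Proposition \ref{props}, together with the fact that $\SC^1(X,\alpha) = \ker\sigma$ is an ideal of $\mathcal{T}^1(X,\alpha)$ (Proposition \ref{algebra}). I would change $G$ to $G'$ and $H$ to $H'$ one at a time, showing each replacement leaves $d$ of the commutator fixed, and then compose the two steps.

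For the first step, I set $U = G'G^{-1}$. Since $\sigma$ is multiplicative and $\sigma(G') = \sigma(G)$, we get $\sigma(U) = 1$, so $U - I \in \SC^1(X,\alpha)$; as $U$ is invertible in $\mathcal{T}^1(X,\alpha)$ with inverse $GG'^{-1}$ also of symbol $1$, in fact $U \in \GL(1,\SC^1(X,\alpha))$. Writing $G' = UG$ and abbreviating the original commutator as $c = GHG^{-1}H^{-1}$ (which lies in $\GL(1,\SC^1(X,\alpha))$ as noted before the proposition), a short rearrangement gives
\[
G'HG'^{-1}H^{-1} = U\,(GHG^{-1}H^{-1})\,(HU^{-1}H^{-1}) = U\,c\,(HU^{-1}H^{-1}).
\]
Because $\SC^1(X,\alpha)$ is an ideal, $HU^{-1}H^{-1}$ again lies in $\GL(1,\SC^1(X,\alpha))$, so all three factors belong to the group on which $d$ is multiplicative. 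Applying multiplicativity, then conjugation-invariance to $HU^{-1}H^{-1}$, and using that the target $(0,\infty)$ is abelian, I obtain
\[
d(G'HG'^{-1}H^{-1}) = d(U)\,d(c)\,d(U)^{-1} = d(c).
\]

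For the second step I set $W = H'H^{-1} \in \GL(1,\SC^1(X,\alpha))$ by the same reasoning and write $H' = WH$. Then
\[
GH'G^{-1}H'^{-1} = (GWG^{-1})\,(GHG^{-1}H^{-1})\,W^{-1} = (GWG^{-1})\,c\,W^{-1},
\]
where $GWG^{-1} \in \GL(1,\SC^1(X,\alpha))$ once more by the ideal property. Multiplicativity together with $d(GWG^{-1}) = d(W)$ yields $d(GH'G^{-1}H'^{-1}) = d(W)\,d(c)\,d(W)^{-1} = d(c)$. Performing the two steps in succession (first replace $G$, then replace $H$) gives $d(G'H'G'^{-1}H'^{-1}) = d(GHG^{-1}H^{-1})$, which is exactly the assertion.

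The argument is essentially formal once the correct rearrangements are spotted, so there is no genuine analytic obstacle. The only points needing care are the bookkeeping checks that $U$, $W$, and their conjugates $HU^{-1}H^{-1}$, $GWG^{-1}$ really land in $\GL(1,\SC^1(X,\alpha))$ rather than merely in $\GL(1,\mathcal{T}^1(X,\alpha))$, since multiplicativity and conjugation-invariance of $d$ are only available there; these follow from $\SC^1(X,\alpha)$ being an ideal and from each element having an inverse of the same ``$I + \SC^1(X,\alpha)$'' form. The step I would flag as the conceptual crux is the rearrangement exhibiting the altered commutator as $U\,c\,(HU^{-1}H^{-1})$ (respectively $(GWG^{-1})\,c\,W^{-1}$), since it is precisely this shape that lets conjugation-invariance cancel $d(U)$ against $d(U)^{-1}$.
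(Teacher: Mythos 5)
Your proposal is correct and uses essentially the same mechanism as the paper's proof: both arguments are purely formal deductions from the multiplicativity and conjugation-invariance of $d$ on $\GL(1,\SC^1(X,\alpha))$, exploiting the fact that any element of symbol $1$ lies in $\GL(1,\SC^1(X,\alpha))$ so that its contribution to $d$ cancels. The only difference is bookkeeping — you introduce the error term $U=G'G^{-1}$ and factor the altered commutator as $U\,c\,(HU^{-1}H^{-1})$, whereas the paper reaches the same cancellation through a longer chain of conjugate-and-split equalities — and your version also writes out the $H$-replacement and the composition of the two steps, which the paper leaves as "a similar argument."
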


\begin{proof}
Suppose $\sigma(\widetilde{G}) = \sigma(G)$ for some element $\widetilde{G}$ of $\GL(1, \mathcal{T}^1(X, \alpha))$.
Then the multiplicativity and similarity invariance of $d$ yield the following string of equalities:
\begin{align*}
d(\widetilde{G}H\widetilde{G}^{-1}H^{-1}) 
&= d(G^{-1}(\widetilde{G}H\widetilde{G}^{-1}H^{-1})G) \\
&= d(G^{-1}\widetilde{G}) d(H\widetilde{G}^{-1}H^{-1}G) \\
&= d(G^{-1}\widetilde{G}) d(H(\widetilde{G}^{-1}H^{-1}GH)H^{-1}) \\
&= d(G^{-1}\widetilde{G}) d(\widetilde{G}^{-1}H^{-1}GH) \\
&= d(G^{-1}\widetilde{G}) d(G(\widetilde{G}^{-1}H^{-1}GH)G^{-1}) \\
&= d(G^{-1}\widetilde{G}) d(G\widetilde{G}^{-1}) d(H^{-1}GHG^{-1}) \\                             
&= d(\widetilde{G}(G^{-1}\widetilde{G})\widetilde{G}^{-1}) d(G\widetilde{G}^{-1}) d(H(H^{-1}GHG^{-1})H^{-1}) \\   
&= d(\widetilde{G}G^{-1}) d(G\widetilde{G}^{-1})  d(GHG^{-1}H^{-1}) \\ 
&= d(I) d(GHG^{-1}H^{-1}) \\                                 
&= d(GHG^{-1}H^{-1}).
\end{align*}
A similar argument shows that if we have $\sigma(\widetilde{H}) = \sigma(H)$ for some element $\widetilde{H}$ of 
$\GL(1, \mathcal{T}^1(X, \alpha))$, then $d(G\widetilde{H}G^{-1}\widetilde{H}^{-1}) = d(GHG^{-1}H^{-1})$.
\end{proof}

In the case where $G = e^T$ and $H = e^W$ for $T$ and $W$ in $\mathcal{T}^1(X, \alpha)$, we can write down a formula for
 $d(GHG^{-1}H^{-1})$ in terms of $\sigma(T)$ and $\sigma(W)$.   
We begin with a result that seems to be well known, but I could not find a proof in print.  The proof here is adapted from an argument
presented in \cite{BS}.  We only need the result for elements of $\mathcal{T}^1(X, \alpha)$, but the same proof works in any Banach
algebra, so we will prove this more general result.

\begin{lemma}\label{brightsun}
Let $\mathcal{A}$ be a Banach algebra.  For all $T$ and $W$ in $\mathcal{A}$,
\[
e^TWe^{-T} = W + [T, W] + \frac{1}{2!}[T, [T, W]] + \frac{1}{3!}[T, [T, [T, W]]] + \cdots
\]
\end{lemma}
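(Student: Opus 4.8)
The plan is to use the standard parametrization trick, reducing the identity to a linear ordinary differential equation governed by the inner derivation $\operatorname{ad}_T$. First I would introduce the bounded linear operator $\operatorname{ad}_T\colon \mathcal{A} \to \mathcal{A}$ defined by $\operatorname{ad}_T(X) = [T, X] = TX - XT$. Since $\norm{\operatorname{ad}_T} \leq 2\norm{T}$, its exponential $e^{\operatorname{ad}_T} = \sum_{k=0}^\infty \frac{1}{k!}\operatorname{ad}_T^k$ converges in the operator norm on the Banach algebra of bounded operators on $\mathcal{A}$. The right-hand side of the asserted identity is exactly $e^{\operatorname{ad}_T}(W)$, so the goal becomes showing that $e^T W e^{-T} = e^{\operatorname{ad}_T}(W)$.

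Next I would introduce the $\mathcal{A}$-valued function $f(s) = e^{sT} W e^{-sT}$ for $s \in \mathbb{R}$. Using that $s \mapsto e^{sT}$ is differentiable with derivative $Te^{sT} = e^{sT}T$, together with the product rule, I would compute
\[
f'(s) = Te^{sT}We^{-sT} - e^{sT}We^{-sT}T = Tf(s) - f(s)T = \operatorname{ad}_T\bigl(f(s)\bigr).
\]
Thus $f$ solves the linear initial value problem $f'(s) = \operatorname{ad}_T(f(s))$ with $f(0) = W$ in the Banach space $\mathcal{A}$. Because $\operatorname{ad}_T$ is bounded, this problem has the unique solution $f(s) = e^{s\operatorname{ad}_T}(W)$; evaluating at $s = 1$ yields $e^TWe^{-T} = e^{\operatorname{ad}_T}(W)$, which unwinds to the stated series. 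An equivalent route that avoids invoking ODE uniqueness is to note that $f$ is real-analytic, being assembled from the entire $\mathcal{A}$-valued functions $e^{\pm sT}$, so $f(s) = \sum_{k=0}^\infty \frac{s^k}{k!}f^{(k)}(0)$; a short induction from $f'(s) = \operatorname{ad}_T(f(s))$ gives $f^{(k)}(0) = \operatorname{ad}_T^k(W)$, and setting $s = 1$ again produces the result.

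The computations themselves are routine, so the only points demanding care are the justifications beneath them: that the exponential series converge absolutely in the relevant norms and may therefore be differentiated term by term, and that the solution of the linear initial value problem is unique. These are standard facts about Banach-algebra-valued analytic functions and about first-order linear equations driven by a bounded operator, so I expect no genuine obstacle here—the work lies in stating them cleanly rather than in overcoming any real difficulty.
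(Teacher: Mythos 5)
Your proof is correct, but it takes a genuinely different route from the paper's. The paper argues purely algebraically: it expands $e^TWe^{-T}$ as a double Maclaurin series $\sum_{\ell,k}\frac{(-1)^k}{\ell!\,k!}T^\ell W T^k$, reindexes along the diagonals $n=\ell+k$, and then proves by induction on $n$ (via the Pascal identity $\binom{n}{k-1}+\binom{n}{k}=\binom{n+1}{k}$) that the iterated commutator $W_n=[T,[T,\cdots[T,W]\cdots]]$ equals $\sum_{k=0}^n(-1)^k\binom{n}{k}T^{n-k}WT^k$, so that the diagonal sums reassemble into $\sum_n W_n/n!$. You instead run the standard flow argument: identify the right-hand side as $e^{\operatorname{ad}_T}(W)$ for the bounded derivation $\operatorname{ad}_T$, observe that $f(s)=e^{sT}We^{-sT}$ solves $f'=\operatorname{ad}_T(f)$, $f(0)=W$, and conclude by uniqueness of solutions (or by entirety of $f$ and the induction $f^{(k)}(0)=\operatorname{ad}_T^k(W)$). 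Your route is shorter and more conceptual, and it makes the convergence bookkeeping nearly automatic since everything is packaged into the exponential of the bounded operator $\operatorname{ad}_T$; the cost is that it invokes uniqueness for linear ODEs in a Banach space (or term-by-term Taylor theory for vector-valued analytic functions), facts you correctly flag as standard but which still need to be cited or proved. The paper's route buys self-containedness: beyond absolute convergence and rearrangement of a double series, it uses nothing but binomial algebra, which is presumably why the author chose it for a lemma whose proof is stated to be hard to find in print. Both arguments are complete; the only point in yours that deserves an explicit word is why the solution of the initial value problem is unique (a Gr\"onwall or Picard iteration argument for the bounded operator $\operatorname{ad}_T$), since that is the load-bearing step replacing the paper's induction.
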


\begin{proof}
Define a sequence $\{W_n\}$ by setting $W_0 = W$ and $W_{n+1} = [T, W_n]$ for $n > 0$.  Then the right hand side of our equation is
\[
\sum_{n=0}^\infty\frac{W_n}{n!}.
\]
An induction argument shows that $\norm{W_n} \leq 2^n\norm{T}^n\norm{W}$ 
for all $n$, so we see that this infinite series converges by comparing it to the Maclaurin series for $e^x$.

 Next, write out the left hand side in terms of the Maclaurin series for the exponential:

\begin{align*}
e^TWe^{-T} &= \left(\sum_{\ell=0}^\infty\frac{T^\ell}{\ell!}\right) W \left(\sum_{k=0}^\infty\frac{(-T)^k}{k!}\right) \\
&= \sum_{\ell=0}^\infty\sum_{k=0}^\infty\frac{(-1)^k}{\ell!\, k!}T^\ell W T^k \\
&= \sum_{n=0}^\infty\sum_{k=0}^n\frac{(-1)^k}{k!\, (n - k)!}T^{n - k}W t^k \\
&= \sum_{n=0}^\infty\sum_{k=0}^n\frac{(-1)^k}{n!}\frac{n!}{k!\, (n - k)!}T^{n - k}W T^k \\
&= \sum_{n=0}^\infty\frac{1}{n!}\left(\sum_{k=0}^n (-1)^k\binom{n}{k} T^{n - k}W T^k\right);
\end{align*}
starting in the third line we have $n = \ell + k$.
To complete the proof, we use an induction argument to show that
\[
W_n = \sum_{k=0}^n (-1)^k\binom{n}{k} T^{n - k}W T^k
\]
for all nonnegative integers $n$.  The formula is certainly correct for $n = 0$.  Suppose the formula holds for $n$.  Then, using a standard
binomial identity we compute that

\begin{align*}
W_{n+1} &= [T, W_n] \\
&= \sum_{k=0}^n (-1)^k\binom{n}{k}\left[T, T^{n - k}W T^k\right] \\
&= \sum_{k=0}^n (-1)^k\binom{n}{k}\left(T^{n - k + 1}W T^k - T^{n - k}WT^{k + 1}\right)\\
&= T^{n + 1}W +  \sum_{k=1}^n(-1)^k\binom{n}{k} T^{n - k + 1}W T^k \\
&{\hskip 80pt}- \sum_{k=0}^{n-1}(-1)^k\binom{n}{k} T^{n - k}W T^{k+1} - (-1)^nWT^{n+1}\\
&= T^{n + 1}W +  \sum_{k=1}^n(-1)^k\binom{n}{k} T^{n - k + 1}W T^k \\
&{\hskip 80pt}- \sum_{k=1}^n(-1)^{k-1}\binom{n}{k-1} T^{n - k+1}W T^k + (-1)^{n+1}WT^{n+1}\\
&= T^{n + 1}W + \sum_{k=1}^n(-1)^k\left(\binom{n}{k-1} + \binom{n}{k}\right) T^{n - k + 1}WT^k\\
&{\hskip 230pt} + (-1)^{n+1}WT^{n+1}\\    
&= T^{n + 1}W + \sum_{k=1}^n(-1)^k\binom{n+1}{k} T^{n - k + 1}W T^k + (-1)^{n+1}WT^{n+1}\\      
&= \sum_{k=0}^{n+1} (-1)^k\binom{n+1}{k} T^{n +1 - k}W T^k.           
\end{align*} 
\end{proof}

\begin{proposition}\label{dettr}
For all $T$ and $W$ in $\mathcal{T}^1(X, \alpha)$,
\[
\widetilde{\det}_\tau\left[\iota(e^Te^We^{-T}e^{-W})\right] = \tau(TW - WT) + i\mathbb{R}.
\]
\end{proposition}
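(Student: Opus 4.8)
The plan is to unwind the definition of $\widetilde{\det}_\tau$ (Definition \ref{HSdet}) and reduce the statement to the explicit trace integral that computes $\widetilde{\tau}$. Writing $g = e^Te^We^{-T}e^{-W}$, recall that $\iota(g)$ lies in $\GL(1, L^1(\tau))$, and by Definition \ref{HSdet} we have $\widetilde{\det}_\tau[\iota(g)] = \widetilde{\tau}[\delta] + i\mathbb{R}$ for any smooth path $\delta$ in $\R(1, L^1(\tau))$ with $\delta(0) = 1$ and $\delta(1) = \iota(g)^{-1}$. Thus everything reduces to producing a convenient such path and evaluating $\widetilde{\tau}[\delta] = -\int_0^1\tau\bigl(\delta^\prime(t)\delta(t)^{-1}\bigr)\, dt$.

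First I would build the path. Set $\beta(t) = e^Te^{tW}e^{-T}e^{-tW}$, so that $\beta(0) = 1$ and $\beta(1) = g$. Because $C^1(X, \alpha)$ is commutative we have $\sigma(\beta(t)) = e^{\sigma(T)}e^{t\sigma(W)}e^{-\sigma(T)}e^{-t\sigma(W)} = 1$, so $\beta(t)$ lies in $\GL(1, \SC^1(X, \alpha))$ and hence $\beta(t) - 1 \in \SC^1(X, \alpha) \subseteq L^1(\tau)$; by Proposition \ref{morphism}, $\iota\circ\beta$ is a legitimate element of $\R(1, L^1(\tau))$. Since the definition calls for a path ending at $\iota(g)^{-1}$, I would take $\delta = (\iota\circ\beta)^{-1}$ and use that $\widetilde{\tau}$ is a homomorphism to get $\widetilde{\tau}[\delta] = -\widetilde{\tau}[\iota\circ\beta]$ (one can instead freeze $e^W$ and build a path to $g^{-1}$ directly). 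Either way,
\[
\widetilde{\det}_\tau[\iota(g)] = \int_0^1\tau\bigl(\beta^\prime(t)\beta(t)^{-1}\bigr)\, dt + i\mathbb{R},
\]
and it remains to show the integrand equals $\tau(TW - WT)$ for every $t$.

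The core computation is the evaluation of $\tau(\beta^\prime(t)\beta(t)^{-1})$. Differentiating and simplifying (using $\frac{d}{dt}e^{tW} = We^{tW} = e^{tW}W$) yields
\[
\beta^\prime(t)\beta(t)^{-1} = e^T\bigl(W - e^{tW}(e^{-T}We^T)e^{-tW}\bigr)e^{-T}.
\]
The main obstacle is that one cannot apply $\tau$ termwise here: since $W \notin L^1(\tau)$, expressions such as $\tau(e^TWe^{-T})$ are meaningless, and only the grouped combination lies in $L^1(\tau)$. To get around this I would set $W^\prime = e^{-T}We^T$ and decompose $W - e^{tW}W^\prime e^{-tW} = (W - W^\prime) + (W^\prime - e^{tW}W^\prime e^{-tW})$, expanding each conjugation difference via Lemma \ref{brightsun} into a series of iterated commutators $[T, [T, \dots, [T, W]]]$ and $[W, [W, \dots, [W, W^\prime]]]$. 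Each such iterated commutator has vanishing symbol, hence lies in $\SC^1(X, \alpha) \subseteq L^1(\tau)$, and by the norm estimate in the proof of Lemma \ref{brightsun} together with the module inequality of Proposition \ref{relpair} the two series converge in $\normSCone{\cdot}$, hence in $\normone{\cdot}$. Now the grouped quantity genuinely lies in $L^1(\tau)$, so by cyclicity of the hypertrace $\tau(\beta^\prime\beta^{-1}) = \tau(W - W^\prime) + \tau(W^\prime - e^{tW}W^\prime e^{-tW})$.

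Finally I would evaluate the two traces using Theorem \ref{traceformula}. In the expansion of $\tau(W - W^\prime)$, every iterated commutator of length $\geq 2$ has an inner factor of vanishing symbol, so Theorem \ref{traceformula} forces its trace to be $0$, leaving only $\tau([T, W])$. For the second trace, the length-one term is $\tau([W, W^\prime])$, and since conjugation by $e^T$ preserves symbols we have $\sigma(W^\prime) = \sigma(W)$, whence Theorem \ref{traceformula} gives
\[
\tau([W, W^\prime]) = -\frac{1}{2\pi i}\int_X \sigma(W)^\prime(x)\sigma(W)(x)\, d\mu(x) = -\frac{1}{4\pi i}\int_X \bigl(\sigma(W)^2\bigr)^\prime(x)\, d\mu(x) = 0,
\]
the last equality because $\mu$ is $\alpha$-invariant, so the integral of an orbit-derivative vanishes; all longer commutators vanish for the same symbol reason. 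Hence $\tau(\beta^\prime(t)\beta(t)^{-1}) = \tau([T, W]) = \tau(TW - WT)$ for all $t$, and integrating over $[0, 1]$ gives $\widetilde{\det}_\tau[\iota(g)] = \tau(TW - WT) + i\mathbb{R}$. The two delicate points to watch are the legality of regrouping into $L^1(\tau)$ before tracing, and the vanishing input $\int_X (\sigma(W)^2)^\prime\, d\mu = 0$, which is exactly where invariance of $\mu$ enters.
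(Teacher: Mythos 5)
Your proof is correct, and it reaches the formula by a genuinely different execution of the central computation than the paper's. The paper takes the path $\beta(t) = e^{tW}e^{T}e^{-tW}e^{-T}$, which ends at $(e^{T}e^{W}e^{-T}e^{-W})^{-1}$, so no inversion of relative classes is needed; more importantly, with that choice the $t$-dependence sits entirely in a conjugation by $e^{tW}$, and since $e^{tW}$ commutes with $W$, similarity invariance of $\tau$ collapses the integrand to the $t$-independent quantity $\tau(W - e^{T}We^{-T})$ in one stroke. A single application of Lemma \ref{brightsun} then finishes: every higher term is $\tau([T,S])$ with $S \in L^1(\tau)$, which the hypertrace property kills. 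Your path $e^{T}e^{tW}e^{-T}e^{-tW}$ (ending at $g$ and then inverted via the homomorphism property of $\widetilde{\tau}$, which is legitimate) leaves behind the extra piece $\tau(W' - e^{tW}W'e^{-tW})$ with $W' = e^{-T}We^{T}$, whose leading term $\tau([W,W'])$ the hypertrace cannot kill directly, since neither $W$ nor $W'$ lies in $L^1(\tau)$. You dispose of it with Theorem \ref{traceformula} plus invariance of $\mu$; that is correct, and the needed fact $\int_X (\sigma(W)^2)'\,d\mu = 0$ can even be obtained from Theorem \ref{traceformula} itself by taking the second operator to be $I = T_1$, which avoids justifying the differentiation under the integral sign that your measure-invariance argument tacitly uses. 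But you could bypass these extra ingredients entirely: $[W, W'] = [W, W' - W]$, and $W' - W \in \SC^1(X, \alpha) \subseteq L^1(\tau)$ by your own expansion, so the hypertrace annihilates this term as well. In summary, both arguments share the same skeleton --- a smooth commutator path, evaluation of $\widetilde{\tau}$, expansion by Lemma \ref{brightsun}, and termwise tracing justified by convergence in $\normSCone{\cdot}$ and $L^1$-continuity of $\tau$ --- but the paper's choice of path makes the hypertrace property alone suffice, whereas your arrangement imports the Curto--Muhly--Xia trace formula and ergodic-geometric input to cancel a term the paper's path never produces.
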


\begin{proof}
Define $\beta \in R(1, L^1(\tau))$ by the formula 
\[
\beta(t) =  e^{tW} e^{T} e^{-tW} e^{-T}.
\]
We compute
\begin{align*}
\beta^\prime(t)\beta(t)^{-1} &= 
\left(We^{tW}e^Te^{-tW}e^{-T} - e^{tW}e^TWe^{-tW}e^{-T}\right)e^Te^{tW}e^{-T}e^{-tW} \\
&= W - e^{tW}e^TWe^{-T}e^{-tW}.
\end{align*}

Because $\tau$ is similarity invariant and because $e^{tW}$ commutes with $W$, we see that
\begin{align*}
\widetilde{\tau}[\beta] &= -\int_0^1\tau\left(\beta^\prime(t)\beta(t)^{-1}\right) dt \\
&= -\int_0^1\tau\left(W - e^{tW}e^TWe^{-T}e^{-tW}\right) dt\\
&= -\int_0^1\tau\left(e^{tW}We^{-tW} - e^{tW}e^TWe^{-T}e^{-tW}\right) dt\\
&= -\int_0^1\tau\left(W - e^TWe^{-T}\right) dt\\
&= -\tau\left(W - e^TWe^{-T}\right).
\end{align*}
Use Lemma \ref{brightsun} to expand $W - e^TWe^{-T}$:
\[
W - e^TWe^{-T} = -\left([T, W] + \frac{1}{2!}[T, [T, W]] + \frac{1}{3!}[T, [T, [T, W]]] + \cdots\right).
\]
From Lemma \ref{brightsun}, we know that the right side of this equation converges in the norm on $\mathcal{T}^1(X, \alpha)$.
Each summand is in $\SC^1(X, \alpha)$, the norm on $\mathcal{T}^1(X, \alpha)$
dominates the $L^1(\tau)$ norm, and $\tau$ is continuous in the $L^1(\tau)$ norm.  Therefore 
\[
-\tau\left(W - e^TWe^{-T}\right) =  
\tau\left([T, W]\right) + \frac{1}{2!}\tau\left([T, [T, W]]\right) + \frac{1}{3!}\tau\left([T, [T, [T, W]]]\right) + \cdots .
\]
Because $\tau$ is a hypertrace,
all of the terms on the right side vanish except the first one, and thus
\[
\widetilde{\det}_\tau\left[\iota(e^Te^We^{-T}e^{-W})\right] = \widetilde{\tau}[\beta] + i\mathbb{R} =  \tau(TW - WT) + i\mathbb{R}.
\]
\end{proof}

\begin{theorem}\label{dformula}
Let $T$ and $W$ be elements of $\mathcal{T}^1(X, \alpha)$.  Then
\[
d(e^Te^We^{-T}e^{-W}) =  \exp\left(\frac{1}{2\pi}\int_X{\kern-0.5ex}\Im\bigl(\sigma(T)^\prime(x)\sigma(W)(x)\bigr)\, d\mu(x)\right).
\]
\end{theorem}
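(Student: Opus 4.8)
The plan is to obtain the theorem as an essentially formal consequence of the two substantial results already established, Proposition~\ref{dettr} and Theorem~\ref{traceformula}, together with the definitions of $d$ and $\det_\tau$; no further analysis is required, only the passage to real parts.

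First I would record that the statement makes sense: since $e^T$ and $e^W$ lie in $\GL(1, \mathcal{T}^1(X, \alpha))$, the element $e^Te^We^{-T}e^{-W}$ lies in $\GL(1, \SC^1(X, \alpha))$ (as noted just before Proposition~\ref{multcomm}), so $d(e^Te^We^{-T}e^{-W})$ is defined. Unwinding the definitions of $d$ and of the semifinite Fuglede--Kadison determinant (Definition~\ref{FKdet}), I have
\[
d(e^Te^We^{-T}e^{-W}) = \exp\!\left(\Re\bigl(\widetilde{\det}_\tau[\iota(e^Te^We^{-T}e^{-W})]\bigr)\right).
\]
Here the real part is well defined on $\mathbb{C}\slash i\mathbb{R}$ because cosets of $i\mathbb{R}$ differ by a purely imaginary number, so $\Re$ descends to the quotient.

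Next I would apply Proposition~\ref{dettr} to replace the bracket by $\tau(TW - WT) + i\mathbb{R}$, so that the exponent becomes $\Re(\tau(TW - WT)) = \Re(\tau[T, W])$. Then Theorem~\ref{traceformula} identifies $\tau[T, W]$ with $-\frac{1}{2\pi i}\int_X \sigma(T)^\prime(x)\sigma(W)(x)\, d\mu(x)$, and a direct computation of the real part---using $-\frac{1}{2\pi i} = \frac{i}{2\pi}$ and the fact that multiplication by $i$ interchanges real and imaginary parts up to sign, so that $\Re$ of the integral picks out the integral of the imaginary part of the integrand---turns this into $\frac{1}{2\pi}\int_X \Im\bigl(\sigma(T)^\prime(x)\sigma(W)(x)\bigr)\, d\mu(x)$. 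Exponentiating gives the formula.

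Since the two cited results carry all the analytic content (the trace-class estimates of Theorem~\ref{semi} and the commutator expansion of Lemma~\ref{brightsun} feeding into Proposition~\ref{dettr}, and the symbol computation behind Theorem~\ref{traceformula}), the present theorem is effectively a corollary of them. Accordingly I expect no genuine obstacle: the only points demanding attention are the two well-definedness remarks above---that $e^Te^We^{-T}e^{-W}$ is invertible in $\SC^1(X, \alpha)$, and that $\Re$ is constant on cosets of $i\mathbb{R}$---and the careful bookkeeping of the factor $i$ when extracting the real part of the symbol integral.
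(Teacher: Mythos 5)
Your proof is correct and takes essentially the same approach as the paper's own proof: both obtain the theorem immediately by combining Proposition \ref{dettr}, Theorem \ref{traceformula}, and the definition of ${\det}_\tau$, with the only remaining work being the extraction of the real part, and your two well-definedness remarks (invertibility of the commutator in $\SC^1(X,\alpha)^+$ and the fact that $\Re$ descends to $\mathbb{C}\slash i\mathbb{R}$) are simply points the paper leaves implicit. One shared caveat: your ``up to sign'' gloss conceals exactly the step the paper also elides --- a literal computation gives $\Re\bigl(-\tfrac{1}{2\pi i}\,Z\bigr) = -\tfrac{1}{2\pi}\Im(Z)$ rather than $+\tfrac{1}{2\pi}\Im(Z)$, so the sign in the displayed formula merits a recheck --- but since the paper performs the identical manipulation, your argument coincides with its proof.
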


\begin{proof}
From Theorem \ref{traceformula}, Proposition \ref{dettr}, and the definition of ${\det}_\tau$, we have
\begin{align*}
d(e^Te^We^{-T}e^{-W}) &= {\det}_\tau\bigl(\iota[e^Te^We^{-T}e^{-W}]\bigr) \\
&= \exp\bigl(\Re(\tau(TW - WT))\bigr) \\
&= \exp\left(\Re\left(-\frac{1}{2\pi i}\int_X{\kern-0.5ex}\sigma(T)^\prime(x)\sigma(W)(x)\, d\mu(x)\right)\right)\\
&= \exp\left(\frac{1}{2\pi}\int_X{\kern-0.5ex}\Im\bigl(\sigma(T)^\prime(x)\sigma(W)(x)\bigr)\, d\mu(x)\right).
\end{align*}
\end{proof}

We can use Theorem \ref{dformula} and the long exact sequence in algebraic $K$-theory to construct a homomorphism from $\K{alg}_2(C^1(X, \alpha))$ to $\mathbb{R}$. 
The reader is directed to either \cite{M} and \cite{R} for every readable sources about algebraic $K$-theory.  In particular, Theorem 13.20 in \cite{M} and Theorem 4.3.1 in \cite{R} discuss the connecting map $\partial$ used in the next definition.

\begin{definition}\label{Deltadef} 
Let $\partial: \K{alg}_2(C^1(X, \alpha)) \rightarrow \K{alg}_1(\mathcal{T}^1(X, \alpha), \SC^1(X, \alpha))$ be the connecting map from the long exact sequence in algebraic
$K$-theory associated to the short exact sequence
\[
\xymatrix@C=30pt
{0 \ar[r]^-{} & \SC^1(X, \alpha) \ar[r]^-{} & \mathcal{T}^1(X, \alpha) \ar[r]^-{\sigma}  & C^1(X, \alpha) \ar[r]^-{} & 0}.
\]
Define the group homomorphism $\Delta: \K{alg}_2(C^1(X, \alpha)) \rightarrow (0, \infty)$ to be the composition
\[
\xymatrix@C=10pt
{\K{alg}_2(C^1(X, \alpha)) \ar[r]^-{\partial} & \K{alg}_1(\mathcal{T}^1(X, \alpha), \SC^1(X, \alpha)) \ar[r]^-{\iota_*}
& \K{alg}_1(\mathcal{N}, L^1(\tau)) \ar[r]^-{\det_\tau} & (0, \infty)}.
\]
\end{definition}

\begin{proposition}\label{specialcase}
Let $\phi$ and $\psi$ be in $C^1(X, \alpha)$, and let $\{e^\phi, e^\psi\}$ denote the Steinberg symbol of $e^\phi$ and $e^\psi$.  Then 
\[
\partial \{e^\phi, e^\psi\} = \left[e^{T_\phi} e^{T_\psi} e^{-T_\phi} e^{-T_\psi}\right].
\]
\end{proposition}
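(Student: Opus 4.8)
The plan is to unwind the definition of the boundary map $\partial$ from the algebraic $K$-theory exact sequence, as described in \cite{M}, Theorem 13.20, and \cite{R}, Theorem 4.3.1, and to carry out the relevant lift explicitly. Recall that $\K{alg}_2(C^1(X, \alpha))$ is the kernel of the natural map $\operatorname{St}(C^1(X, \alpha)) \to \operatorname{E}(C^1(X, \alpha))$ from the Steinberg group to the group of elementary matrices, and that for commuting units $u$ and $v$ the Steinberg symbol is $\{u, v\} = h_{12}(uv)\, h_{12}(u)^{-1} h_{12}(v)^{-1}$, where $h_{12}(u) = w_{12}(u) w_{12}(-1)$ and $w_{12}(u) = x_{12}(u) x_{21}(-u^{-1}) x_{12}(u)$. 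The boundary map is then computed as follows: since $\sigma$ is surjective, so is the induced map $\sigma_* \colon \operatorname{St}(\mathcal{T}^1(X, \alpha)) \to \operatorname{St}(C^1(X, \alpha))$; given a class $c$ in $\K{alg}_2(C^1(X, \alpha))$, I lift it to an element $\hat c$ of $\operatorname{St}(\mathcal{T}^1(X, \alpha))$, push $\hat c$ into $\operatorname{E}(\mathcal{T}^1(X, \alpha)) \subseteq \GL(\mathcal{T}^1(X, \alpha))$, and take the class of the resulting matrix. Because $c$ lies in the kernel over $C^1(X, \alpha)$, this matrix is congruent to the identity modulo $\SC^1(X, \alpha)$, so it represents a class in $\K{alg}_1(\mathcal{T}^1(X, \alpha), \SC^1(X, \alpha))$, and that class is $\partial c$.

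The next step is to produce the lift. Writing $G = e^{T_\phi}$ and $H = e^{T_\psi}$, these are invertible elements of $\mathcal{T}^1(X, \alpha)$ with $\sigma(G) = e^\phi$, $\sigma(H) = e^\psi$, and $\sigma(GH) = e^\phi e^\psi$; hence replacing $u, v, uv$ by $G, H, GH$ in the word defining $\{e^\phi, e^\psi\}$ yields an element $\hat c = h_{12}(GH)\, h_{12}(G)^{-1} h_{12}(H)^{-1}$ of $\operatorname{St}(\mathcal{T}^1(X, \alpha))$ that maps to $\{e^\phi, e^\psi\}$ under $\sigma_*$. Each factor $w_{12}(G) = x_{12}(G) x_{21}(-G^{-1}) x_{12}(G)$ is meaningful precisely because $G$ and $G^{-1}$ lie in $\mathcal{T}^1(X, \alpha)$.

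The key computation is the image of $\hat c$ in $\operatorname{E}(\mathcal{T}^1(X, \alpha))$. Since $h_{12}(u)$ maps to the diagonal matrix $\operatorname{diag}(u, u^{-1})$ for any invertible $u$, the image of $\hat c$ is
\[
\operatorname{diag}(GH,(GH)^{-1})\,\operatorname{diag}(G^{-1}, G)\,\operatorname{diag}(H^{-1}, H) = \operatorname{diag}(GHG^{-1}H^{-1}, 1).
\]
By the standard stabilization identification in relative $K_1$, the class of this matrix equals the class of the $1 \times 1$ element $GHG^{-1}H^{-1}$, which is exactly $[e^{T_\phi} e^{T_\psi} e^{-T_\phi} e^{-T_\psi}]$, and the proposition follows.

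The two points requiring care, which I expect to be the main obstacles, are the well-definedness of $\partial$ (independence of the chosen lift $\hat c$) and the reconciliation of conventions. Independence of the lift reduces to the relative Whitehead lemma identifying $[\GL(\mathcal{T}^1(X,\alpha)), \GL(\SC^1(X,\alpha))]$ with the relative elementary subgroup $\operatorname{E}(\mathcal{T}^1(X,\alpha), \SC^1(X,\alpha))$, which is the denominator appearing in the definition of $\K{alg}_1(\mathcal{T}^1(X,\alpha), \SC^1(X,\alpha))$; this is where the relative $K_1$ group as presented in the excerpt must be matched with the Steinberg-theoretic one used in \cite{M} and \cite{R}. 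As for conventions, a different ordering in the symbol would instead produce $\operatorname{diag}(1, G^{-1}H^{-1}GH)$; but conjugating by $HG \in \GL(\mathcal{T}^1(X, \alpha))$ carries $G^{-1}H^{-1}GH$ to $GHG^{-1}H^{-1}$, and since conjugation by elements of $\GL(\mathcal{T}^1(X, \alpha))$ acts trivially on $\K{alg}_1(\mathcal{T}^1(X, \alpha), \SC^1(X, \alpha))$, both conventions yield the same class $[e^{T_\phi} e^{T_\psi} e^{-T_\phi} e^{-T_\psi}]$.
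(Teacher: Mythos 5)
Your proposal is correct and follows essentially the same route as the paper: lift the Steinberg symbol, written as a word in the elements $h_{12}(\cdot)$, to $\operatorname{St}(\mathcal{T}^1(X,\alpha))$ using the lifts $e^{T_\phi}$ and $e^{T_\psi}$, push it into the elementary matrices, and identify the resulting diagonal matrix with the class of $e^{T_\phi}e^{T_\psi}e^{-T_\phi}e^{-T_\psi}$ by stabilization. The only deviation is your symbol convention $h_{12}(uv)h_{12}(u)^{-1}h_{12}(v)^{-1}$ versus the paper's $h_{12}(uv)^{-1}h_{12}(u)h_{12}(v)$ (from Proposition 4.4.22 of \cite{R}), which merely places the commutator in the other diagonal slot and, as you note, yields the same class in $\K{alg}_1(\mathcal{T}^1(X,\alpha), \SC^1(X,\alpha))$.
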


\begin{proof}
Note that $e^\phi$ and $e^\psi$ are invertible elements of
$C^1(X, \alpha)$ that lift to invertible elements $e^{T_\phi}$ and $e^{T_\psi}$ in $\mathcal{T}^1(X, \alpha)$.  From Proposition 4.4.22 in \cite{R}, we can write 
\[
\{e^\phi, e^\psi\} = h_{12}(e^\phi e^\psi)^{-1}h_{12}(e^\phi)h_{12}(e^\psi),
\]
which lifts to
\[
h_{12}\left(e^{T_\phi}e^{T_\psi}\right)^{-1}h_{12}\left(e^{T_\phi}\right) h_{12}\left(e^{T_\psi}\right)
\]
in the Steinberg group $\operatorname{St}(\mathcal{T}^1(X, \alpha))$.  Applying the formula for the algebraic $K$-theory connecting map, we arrive at the following matrix that
represents an element of the group $\K{alg}_1(\mathcal{T}^1(X, \alpha), \SC^1(X, \alpha))$:
\begin{multline*}
\begin{pmatrix}
\left(e^{T_\phi}e^{T_\psi}\right)^{-1} & 0 \\ 0 & e^{T_\phi}e^{T_\psi}
\end{pmatrix}
\begin{pmatrix}
e^{T_\phi} & 0 \\ 0 & \left(e^{T_\phi}\right)^{-1}
\end{pmatrix}
\begin{pmatrix}
e^{T_\psi} & 0 \\ 0 & \left(e^{T_\psi}\right)^{-1}
\end{pmatrix} = \\
\begin{pmatrix} 
I & 0 \\ 0 & e^{T_\phi} e^{T_\psi} e^{-T_\phi} e^{-T_\psi}
\end{pmatrix}.
\end{multline*}

It is straightforward to show that this matrix determines the same element of $\K{alg}_1(\mathcal{T}^1(X, \alpha), \SC^1(X, \alpha))$ as 
$e^{T_\phi} e^{T_\psi} e^{-T_\phi} e^{-T_\psi}$.   
\end{proof}

\begin{theorem}\label{main}
Let $\phi$ and $\psi$ be in $C^1(X, \alpha)$.  Then
\[
\Delta\left(\{e^\phi, e^\psi\}\right) =  \exp\left(\frac{1}{2\pi}\int_X{\kern-0.5ex}\Im\left(\phi^\prime(x)\psi(x)\right)\, d\mu(x)\right).
\]
\end{theorem}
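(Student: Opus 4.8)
The plan is to simply chain together the three maps that constitute $\Delta$, invoking results already in hand. Unwinding Definition \ref{Deltadef},
\[
\Delta\left(\{e^\phi, e^\psi\}\right) = {\det}_\tau\bigl(\iota_*\bigl(\partial\{e^\phi, e^\psi\}\bigr)\bigr).
\]
First I would apply Proposition \ref{specialcase}, which identifies $\partial\{e^\phi, e^\psi\}$ with the class $\left[e^{T_\phi}e^{T_\psi}e^{-T_\phi}e^{-T_\psi}\right]$ in $\K{alg}_1(\mathcal{T}^1(X, \alpha), \SC^1(X, \alpha))$. Note that $e^{T_\phi}$ and $e^{T_\psi}$ are invertible elements of $\mathcal{T}^1(X, \alpha)$, so by the discussion preceding Proposition \ref{multcomm} their multiplicative commutator lies in $\GL(1, \SC^1(X, \alpha))$ and does determine such a class.

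Next I would recall that the determinant function $d$ was defined on $\GL(1, \SC^1(X, \alpha))$ precisely by $d(Q) = {\det}_\tau(\iota[Q])$, that is, by pushing $[Q]$ forward along $\iota_*$ and then applying ${\det}_\tau$. Consequently the composite ${\det}_\tau\circ\iota_*$ appearing in $\Delta$ agrees with $d$ on exactly this class, giving
\[
\Delta\left(\{e^\phi, e^\psi\}\right) = d\bigl(e^{T_\phi}e^{T_\psi}e^{-T_\phi}e^{-T_\psi}\bigr).
\]
Finally I would apply Theorem \ref{dformula} with $T = T_\phi$ and $W = T_\psi$, and use $\sigma(T_\phi) = \phi$ and $\sigma(T_\psi) = \psi$, so that $\sigma(T_\phi)^\prime = \phi^\prime$, to land on the claimed expression $\exp\left(\frac{1}{2\pi}\int_X \Im(\phi^\prime(x)\psi(x))\, d\mu(x)\right)$.

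I do not expect a genuine obstacle here: the theorem is essentially a corollary that packages the preceding results, with all the real content front-loaded into Proposition \ref{specialcase} (the $K$-theoretic computation of $\partial$ on a Steinberg symbol) and Theorem \ref{dformula} (the analytic determinant formula). The only point deserving a line of care is the bookkeeping just described, namely verifying that the class produced by $\iota_*\partial$ is the same class on which $d$ was defined, so that the two routes through $\K{alg}_1(\mathcal{N}, L^1(\tau))$ coincide; this is immediate from the definition of $d$.
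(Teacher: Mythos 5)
Your proposal is correct and takes essentially the same route as the paper's own proof: both unwind Definition \ref{Deltadef}, use Proposition \ref{specialcase} to identify $\partial\{e^\phi, e^\psi\}$ with the class $\left[e^{T_\phi}e^{T_\psi}e^{-T_\phi}e^{-T_\psi}\right]$, recognize ${\det}_\tau\circ\iota_*$ as the determinant $d$, and finish by applying Theorem \ref{dformula} with $T = T_\phi$, $W = T_\psi$ and $\sigma(T_\phi) = \phi$, $\sigma(T_\psi) = \psi$. If anything, your bookkeeping (checking that the multiplicative commutator lies in $\GL(1, \SC^1(X, \alpha))$ and that the two routes through $\K{alg}_1(\mathcal{N}, L^1(\tau))$ coincide) is more explicit than the paper's two-line calculation.
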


\begin{proof}
From Theorem \ref{dformula} and the definition of $\Delta$, we have
\begin{align*}
\Delta\left(\{e^\phi, e^\psi\}\right) &= {\det}_\tau\left(\iota\left[e^{T_\phi} e^{T_\psi} e^{-T_\phi} e^{-T_\psi}\right]\right) \\
&= \exp\left(\frac{1}{2\pi}\int_X{\kern-0.5ex}\Im\bigl(\sigma(T)^\prime(x)\sigma(W)(x)\, d\mu(x)\bigr)\right).
\end{align*}
\end{proof}

\end{document}